\newtheorem{thm}{Theorem}[section]
\newtheorem{remark}[thm]{Remark}
\title{\bf  On General Duality Principles for  Non-Convex Variational  Optimization with
Applications to the Ginzburg-Landau System in Superconductivity}
\author{Fabio Silva Botelho \\
Federal University of Santa Catarina \\
Florian\'{o}polis - SC, Brazil}
\begin{document}
\maketitle

\abstract{This article develops  duality principles applicable to non-convex models in the calculus of variations. The results here developed are applied to
 Ginzburg-Landau type equations. For the first and second duality principles, through an optimality criterion developed for the dual formulations, we qualitatively classify the critical points of the primal and dual functionals in question. We formally prove there is no duality gap between the primal and dual formulations in a local extremal context.

Finally, in the last sections, we present a global existence result, a duality principle and respective optimality conditions for the complex Ginzburg-Landau system in superconductivity in the presence of a magnetic field and concerning magnetic potential. }

\section{Introduction}

In this work we present three theorems which represent duality principles suitable for a large class of non-convex variational problems.

At this point we refer to the exceptionally important article "A contribution to contact problems for a class of solids and structures" by
W.R. Bielski and J.J. Telega, \cite{85},  published in 1985,  as the first one to successfully  apply and generalize the convex analysis approach to a model in non-convex and non-linear mechanics.

The present work is, in some sense, a kind of extension  of this previous work \cite{85} and others such as \cite{2900}, which greatly influenced and
inspired my work and recent book \cite{12}.

 We extend and generalize the  approaches  in \cite{85} and \cite{12a} and develop two multi-duality principles through which we classify qualitatively the critical points.

 Thus, we emphasize the first multi-duality principle generalizes some Toland results found in \cite{12a} and in some appropriate sense, such a work complements the results presented in \cite{85}, now applied to  a Ginzburg-Landau type  model context.

On the other hand, the conclusions of the second multi-duality principle may be qualitatively found in similar form in the triality approach found in \cite{17} and other references
therein, even though the construction of the present result and the respective  proofs be substantially different.

About the model in physics involved, we recall that about the year 1950 Ginzburg and Landau introduced a theory to model the super-conducting behavior of some types of materials below a critical temperature $T_c$,
which depends on the material in question. They postulated the free density energy may be written close to $T_c$ as
$$F_s(T)=F_n(T)+\frac{\hbar}{4m}\int_\Omega |\nabla \psi|^2_2 \;dx+\frac{\alpha(T)}{4}\int_\Omega |\psi|^4\;dx-\frac{\beta(T)}{2}\int_\Omega |\psi|^2\;dx,$$
where $\psi$ is a complex parameter, $F_n(T)$ and $F_s(T)$ are the normal and super-conducting free energy densities, respectively (see \cite{100, 101}
 for details).
Here $\Omega \subset \mathbb{R}^3$ denotes the super-conducting sample with a boundary denoted by $\partial \Omega=\Gamma.$ The complex function $\psi \in W^{1,2}(\Omega; \mathbb{C})$ is intended to minimize
$F_s(T)$ for a fixed temperature $T$.

Denoting $\alpha(T)$ and $\beta(T)$ simply by $\alpha$ and $\beta$,  the corresponding Euler-Lagrange equations are given by:
\begin{equation} \left\{
\begin{array}{ll}
 -\frac{\hbar}{2m}\nabla^2 \psi+\alpha|\psi|^2\psi-\beta\psi=0, & \text{ in } \Omega
 \\ \\
 \frac{\partial {\psi}}{\partial \textbf{n}}=0, &\text{ on } \partial\Omega.\end{array} \right.\end{equation}
This last system of equations is well known as the Ginzburg-Landau (G-L) one in the absence of a magnetic field and respective potential.

\begin{remark}
 About the notation, for an open bounded subset $\Omega \subset \mathbb{R}^3$, we denote the $L^2(\Omega)$ norm by $\| \cdot \|_{L^2(\Omega)}$ or simply by $\| \cdot \|_2$.
Similar remark is valid for the $L^2(\Omega;\mathbb{R}^3)$ norm, which is denoted by $\|\cdot\|_{L^2(\Omega;\mathbb{R}^3)}$ or simply by $\| \cdot \|_2$, when its meaning is clear.
 On the other hand, by $| \cdot |_2$ we denote the standard Euclidean norm in $\mathbb{R}^3$ or $\mathbb{C}^3$.

Moreover derivatives are always understood in the distributional sense. Also, by a regular Lipschitzian boundary $\partial \Omega=\Gamma$ of $\Omega$, we mean regularity enough  so that the standard Sobolev Imbedding  theorems, the trace theorem and Gauss-Green formulas of integration by parts to hold. Details about such results may be found in \cite{1}.

Finally, in general $\delta F(u,v)$ will denote the Fr\'{e}chet derivative of the functional $F(u,v)$ at $(u,v)$,  $$\delta_u F(u,v) \text{ or }\frac{\partial F(u,v)}{\partial u}$$ denotes the first Fr\'{e}chet derivative of $F$ relating the variable $u$
and $$\delta^2 F_{u,v}(u,v) \text{ or } \frac{\partial^2 F(u,v)}{\partial u \partial v}$$ denotes the second one relating the variables $u$ and $v$, always at $(u,v).$

At some point of our analysis, we shall assume a finite-dimensional approximate model version. In such a context, we remark that generically in a matrix sense the notation $$\frac{1}{K+\gamma \nabla^2}$$ will indicate the inverse $$(K I_d+\gamma \nabla^2)^{-1},$$
where $I_d$ denotes the identity matrix and $\nabla^2$ is the matrix originated by a discretized version of the Laplace operator. We also emphasize that,
as the meaning is clear, other similar notations may be used to indicate the inverse of matrices or operators.
\end{remark}

\begin{remark} For an appropriate set $\Omega \subset \mathbb{R}^3$ and a space $U$, our primal functional $J:U \rightarrow \mathbb{R}$ is specified by
$$J(u)= \frac{\gamma}{2}\int_\Omega \nabla u \cdot \nabla u\;dx + \frac{\alpha}{2}\int_\Omega (u^2-\beta)^2\;dx - \langle u,f \rangle_{L^2}$$
$\forall u \in U,$ where $\alpha,\beta, \gamma >0$, $f \in L^2(\Omega)$.

We define,
$$F(u)=-\frac{\gamma}{2}\int_\Omega \nabla u\cdot \nabla u\;dx+\frac{K}{2}\int_\Omega u^2\;dx,$$
where in a finite dimensional discretized model version, in a finite elements or finite differences context, $K>0$ is such that $$F(u)>0,\; \forall u \in U \text{ such that } u \neq \mathbf{0},$$
and
$$G(u,v)=\frac{\alpha}{2}\int_\Omega (u^2-\beta+v)^2\;dx+\frac{K}{2}\int_\Omega u^2\;dx-\langle u,f \rangle_{L^2},$$
so that
$$J(u)=G(u,0)-F(u),\; \forall u \in U.$$

Let $F^*:Y^* \rightarrow \mathbb{R}$ denote the polar functional related to $F$, that is,
$$F^*(v_1^*)=\sup_{u \in U}\{\langle u,v_1^*\rangle_{L^2}-F(u)\}.$$

Since the optimization in question is quadratic, we have
\begin{equation}\label{pw1}F^*(v_1^*)=\langle \tilde{u},v_1^*\rangle_{L^2}-F(\tilde{u}),\end{equation} where $\tilde{u} \in U$ is such that
$$v_1^*=\frac{\partial F(\tilde{u})}{\partial u},$$
that is,
$$v_1^*=\gamma \nabla^2 \tilde{u}+K \tilde{u},$$
so that
$$\tilde{u}=\frac{v_1^*}{K+\gamma \nabla^2}.$$
Replacing such a $\tilde{u}$ into (\ref{pw1}), we obtain

$$F^*(v_1^*)=\frac{1}{2}\int_\Omega v_1^*[(K+\gamma\nabla^2)^{-1}v_1^*]\;dx.$$

Similarly for $G:U \times Y \rightarrow \mathbb{R}$, for $v_0^* \in Y^*$ such that $$2v_0^*+K>0, \text{ in } \overline{\Omega},$$ we also define

$$G^*(v_1^*,v_0^*)=\sup_{(u,v) \in U \times Y}\{ \langle u,v_1^* \rangle_{L^2}+\langle v,v_0^* \rangle_{L^2}-G(u,v)\},$$
where, as above indicated
$$G(u,v)=\frac{\alpha}{2}\int_\Omega (u^2-\beta+v)^2\;dx+\frac{K}{2}\int_\Omega u^2\;dx-\langle u,f \rangle_{L^2}.$$

Defining $w=u^2-\beta+v,$ so that $$v=w-u^2+\beta,$$ we may write
\begin{eqnarray}G^*(v_1^*,v_0^*)&=&\sup_{(u,w) \in U \times Y}\{ \langle u,v_1^* \rangle_{L^2}+\langle w-u^2+\beta,v_0^* \rangle_{L^2}
\nonumber \\ &&-\frac{\alpha}{2}\int_\Omega w^2\;dx-\frac{K}{2} \int_\Omega u^2\;dx+ \langle u,f\rangle_{L^2}\}.
\end{eqnarray}

Since, the optimization in question is quadratic, we have
\begin{eqnarray}\label{pt1}G^*(v_1^*,v_0^*)&=& \langle \tilde{u},v_1^* \rangle_{L^2}+\langle \tilde{w}-\tilde{u}^2+\beta,v_0^* \rangle_{L^2}
\nonumber \\ &&-\frac{\alpha}{2}\int_\Omega\tilde{w}^2\;dx-\frac{K}{2} \int_\Omega \tilde{u}^2\;dx+ \langle \tilde{u},f\rangle_{L^2}\}.
\end{eqnarray}
where $\tilde{u} \in U$ and $\tilde{w} \in Y$ are such that
$$v_1^*= 2\tilde{u}v_0^*+K \tilde{u}-f,$$
and
$$v_0^*=\alpha \tilde{w},$$
so that
$$\tilde{u}=\frac{v_1^*+f}{2v_0^*+K},$$ and
$$\tilde{w}= \frac{v_0^*}{\alpha}.$$
Replacing such results into (\ref{pt1}), we get
\begin{eqnarray}G^*(v_1^*,v_0^*)&=& \frac{1}{2}\int_\Omega \frac{(v_1^*+f)^2}{2 v_0^*+K}\;dx+\frac{1}{2 \alpha}\int_\Omega (v_0^*)^2\;dx
\nonumber \\ &&+\beta \int_\Omega v_0^*\;dx.\end{eqnarray}
\end{remark}

\section{The main result}

In this section we develop a first multi-duality principle for a Ginzburg-Landau type system in a simpler real context.

About these models in physics,   we refer again to \cite{100,101}.

In the next lines we develop the main result. At this point we highlight the global optimality condition $-\gamma \nabla^2+2\hat{v}_0^*> \mathbf{0}$
at a critical point was presented in \cite{17}.

\begin{thm} Let $\Omega \subset \mathbb{R}^3$ be an open, bounded, connected set with a regular (Lipschitzian) boundary denoted by
$\partial \Omega$.  Suppose $J:U \rightarrow \mathbb{R}$ is a functional defined by
$$J(u)= \frac{\gamma}{2}\int_\Omega \nabla u \cdot \nabla u\;dx + \frac{\alpha}{2}\int_\Omega (u^2-\beta)^2\;dx - \langle u,f \rangle_{L^2}$$
$\forall u \in U,$ where $\alpha,\beta, \gamma >0$, $f \in L^2(\Omega)$ and $U=W_0^{1,2}(\Omega).$

 At this point, we assume a discretized finite dimensional model version (in a finite elements or finite differences context, so that from now on, the not relabeled spaces, functions and operators refer to such a  finite dimensional approximation)  and suppose $$\delta J(u_0)=\mathbf{0}$$ 0
where in an appropriate matrices sense, we have
$$\delta^2 J(u_0)=-\gamma \nabla^2 +6 \alpha\;\{ u_0(i)^2\}-2\alpha \beta I_d.$$

Here $\{ u_0(i)^2\}$ denotes the diagonal matrix which the diagonal is given by the vector $[u_0(i)^2]$.

Also, from now on, as the meaning is clear,  we shall denote such a second Fr\'{e}chet derivative simply by $$\delta^2 J(u_0)=-\gamma \nabla^2 +6 \alpha u_0^2-2\alpha \beta.$$

Define,
$$F(u)=-\frac{\gamma}{2}\int_\Omega \nabla u\cdot \nabla u\;dx+\frac{K}{2}\int_\Omega u^2\;dx,$$
where $K>0$ is such that $$F(u)>0,\; \forall u \in U \text{ such that } u \neq \mathbf{0},$$
and
$$G(u,v)=\frac{\alpha}{2}\int_\Omega (u^2-\beta+v)^2\;dx+\frac{K}{2}\int_\Omega u^2\;dx-\langle u,f \rangle_{L^2},$$
so that
$$J(u)=G(u,0)-F(u),\; \forall u \in U.$$

Define also $Y=Y^*=L^2(\Omega)$, $F^*:Y^* \rightarrow \mathbb{R}$ by \begin{eqnarray}F^*(v_1^*)&=&\sup_{u \in U}\{\langle u,v_1^* \rangle_{L^2}-F(u)\}
\nonumber \\ &=& \frac{1}{2} \int_\Omega v_1^*[(K+\gamma \nabla^2)^{-1} v_1^*]\;dx,\end{eqnarray}
and $G^*: Y^* \times Y^* \rightarrow \overline{\mathbb{R}}=\mathbb{R}\cup \{+\infty\}$ by
\begin{eqnarray}G^*(v_1^*,v_0^*)&=&\sup_{(v_1,v) \in Y \times Y}\{\langle v,v_0^*\rangle_Y +\langle v_1,v_1^* \rangle_Y-G(v_1,v)\}
\nonumber \\ &=& \frac{1}{2}\int_\Omega \frac{(v_1^*+f)^2}{2 v_0^*+K}\;dx+\frac{1}{2 \alpha}\int_\Omega (v_0^*)^2\;dx
\nonumber \\ &&+\beta \int_\Omega v_0^*\;dx, \end{eqnarray}
if $v_0^* \in A^*=\{v_0^* \in Y^* \;:\; 2 v_0^*+K>0, \text{ in } \overline{\Omega}\}.$

Let $J^*: Y^* \times Y^* \rightarrow \mathbb{R} \cup\{-\infty\}$ be such that $$J^*(v_1^*,v_0^*)=-G^*(v_1^*,v_0^*)+F^*(v_1^*),$$
so that $\tilde{J}^*:Y^* \rightarrow \mathbb{R}$ is expressed by
$$\tilde{J}^*(v_1^*)=\sup_{v_0^* \in A^*} J^*(v_1^*,v_0^*).$$

Define $$\hat{v}_0^*=\alpha(u_0^2-\beta),$$
and
$$\hat{v}_1^*=(2\hat{v}_0^*+K)u_0-f.$$

Suppose $\hat{v}_0^* \in A^*.$

Under such hypotheses,
$$\delta \tilde{J}^*(\hat{v}_1^*)=\mathbf{0},$$
and $$\tilde{J}^*(\hat{v}_1^*)=J(u_0).$$
Moreover, \begin{enumerate}
\item if $\delta^2 J(u_0)> \mathbf{0}$, then
$$\delta^2 \tilde{J}^*(\hat{v}_1^*)>\mathbf{0},$$ so that there exist $r>0$ and $r_1>0$ such that
\begin{eqnarray}
J(u_0)&=& \min_{ u \in B_r(u_0)} J(u) \nonumber \\ &=& \min_{v_1^* \in B_{r_1}(\hat{v}_1^*)} \tilde{J}^*(v_1^*)\nonumber \\ &=&
\tilde{J}^*(\hat{v}_1^*) \nonumber \\ &=& J^*(\hat{v}_1^*,\hat{v}_0^*).
\end{eqnarray}
\item If  $-\gamma \nabla^2+2\hat{v}_0^*>\mathbf{0}$ so that $\delta^2J(u_0)> \mathbf{0}$,  then
defining $$B^*=\{v_0^* \in Y^*\::\; -\gamma \nabla^2+2 v_0^*>\mathbf{0}\},$$ we have $$\delta J_2^*(\hat{v}_1^*)=\mathbf{0},$$
$$\delta^2 J_2^*(\hat{v}_1^*)> \mathbf{0}$$ and
\begin{eqnarray}
J(u_0)&=& \min_{ u \in U} J(u) \nonumber \\ &=& \min_{v_1^* \in Y^*} J_2^*(v_1^*)\nonumber \\ &=&
J_2^*(\hat{v}_1^*) \nonumber \\ &=& J^*(\hat{v}_1^*,\hat{v}_0^*),
\end{eqnarray}
where $$J_2^*(v_1^*)=\sup_{v_0^* \in A^* \cap B^*} J^*(v_1^*,v_0^*).$$

\item  If $\delta^2 J(u_0)< \mathbf{0}$, then
 $$\delta^2 \tilde{J}^*(\hat{v}_1^*)<\mathbf{0},$$ so that there exist $r>0$ and $r_1>0$ such that
\begin{eqnarray}
J(u_0)&=& \max_{ u \in B_r(u_0)} J(u) \nonumber \\ &=& \max_{v_1^* \in B_{r_1}(\hat{v}_1^*)} \tilde{J}^*(v_1^*)\nonumber \\ &=&
\tilde{J}^*(\hat{v}_1^*) \nonumber \\ &=& J^*(\hat{v}_1^*,\hat{v}_0^*).
\end{eqnarray}
\end{enumerate}
\end{thm}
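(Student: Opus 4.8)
The plan is to exploit the Legendre-type structure $J(u)=G(u,0)-F(u)$ and the explicit formulas for $F^{*}$ and $G^{*}$, together with the stationarity $\delta J(u_{0})=\mathbf{0}$, to verify by direct computation that the prescribed pair $(\hat v_{1}^{*},\hat v_{0}^{*})$ is a critical point of $J^{*}$ with $\tilde J^{*}(\hat v_{1}^{*})=J(u_{0})$, and then to transfer the second-order information from the primal to the dual problem. First I would record the first-order identities coming from $\hat v_{0}^{*}=\alpha(u_{0}^{2}-\beta)$ and $\hat v_{1}^{*}=(2\hat v_{0}^{*}+K)u_{0}-f$: these say precisely that $u_{0}=(v_{1}^{*}+f)/(2v_{0}^{*}+K)$ and $\tilde w=v_{0}^{*}/\alpha=u_{0}^{2}-\beta$ are the points realizing the suprema in $G^{*}$, and also that $u_{0}=(K+\gamma\nabla^{2})^{-1}\hat v_{1}^{*}$ is the point realizing the supremum in $F^{*}$ (this last is exactly the equation $\delta J(u_{0})=\mathbf 0$ rewritten, since $\gamma\nabla^{2}u_{0}+Ku_{0}=2\hat v_{0}^{*}u_{0}+Ku_{0}-f=\hat v_{1}^{*}$). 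Hence $F^{*}(\hat v_{1}^{*})=\langle u_{0},\hat v_{1}^{*}\rangle_{L^{2}}-F(u_{0})$ and $G^{*}(\hat v_{1}^{*},\hat v_{0}^{*})=\langle u_{0},\hat v_{1}^{*}\rangle_{L^{2}}+\langle 0,\hat v_{0}^{*}\rangle_{L^{2}}-G(u_{0},0)$, and subtracting gives $J^{*}(\hat v_{1}^{*},\hat v_{0}^{*})=F^{*}(\hat v_{1}^{*})-G^{*}(\hat v_{1}^{*},\hat v_{0}^{*})\cdot(-1)$, i.e. $J^{*}(\hat v_{1}^{*},\hat v_{0}^{*})=G(u_{0},0)-F(u_{0})=J(u_{0})$.

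Next I would show $\delta_{v_{0}^{*}}J^{*}(\hat v_{1}^{*},\hat v_{0}^{*})=\mathbf 0$, so that $\hat v_{0}^{*}$ is the inner maximizer and $\tilde J^{*}(\hat v_{1}^{*})=J^{*}(\hat v_{1}^{*},\hat v_{0}^{*})$. Differentiating the explicit $G^{*}$ in $v_{0}^{*}$ and using $\hat v_{1}^{*}+f=(2\hat v_{0}^{*}+K)u_{0}$ one gets $-u_{0}^{2}+\hat v_{0}^{*}/\alpha+\beta=0$, which is just the definition of $\hat v_{0}^{*}$; concavity of $v_{0}^{*}\mapsto J^{*}(v_{1}^{*},v_{0}^{*})$ on $A^{*}$ (the map $v_{0}^{*}\mapsto -\tfrac12\int (v_{1}^{*}+f)^{2}/(2v_{0}^{*}+K)$ is concave where $2v_{0}^{*}+K>0$, and $-\tfrac1{2\alpha}\int (v_{0}^{*})^{2}$ is concave) guarantees this is the unique inner maximum, so $\tilde J^{*}$ is well defined and smooth near $\hat v_{1}^{*}$ with $\delta\tilde J^{*}(\hat v_{1}^{*})=\delta_{v_{1}^{*}}J^{*}(\hat v_{1}^{*},\hat v_{0}^{*})=u_{0}-u_{0}=\mathbf 0$. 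Then I would compute $\delta^{2}\tilde J^{*}(\hat v_{1}^{*})$ by the envelope/implicit-function formula: writing $v_{0}^{*}(v_{1}^{*})$ for the inner maximizer, $\delta^{2}\tilde J^{*}=\delta^{2}_{v_{1}^{*}v_{1}^{*}}J^{*}-\delta^{2}_{v_{1}^{*}v_{0}^{*}}J^{*}\,(\delta^{2}_{v_{0}^{*}v_{0}^{*}}J^{*})^{-1}\,\delta^{2}_{v_{0}^{*}v_{1}^{*}}J^{*}$, a Schur complement. A parallel computation for $J$ via $J(u)=G(u,0)-F(u)$ expresses $\delta^{2}J(u_{0})=-\gamma\nabla^{2}+6\alpha u_{0}^{2}-2\alpha\beta$, and the algebraic punchline is that the Schur complement above is exactly $(2\hat v_{0}^{*}+K)^{-1}$ (or an analogous positive conjugation) times something congruent to $\delta^{2}J(u_{0})$ — concretely, one shows $\delta^{2}\tilde J^{*}(\hat v_{1}^{*})=M^{T}\,(\delta^{2}J(u_{0}))^{-1}\,M$ for an invertible $M$ built from $(K+\gamma\nabla^{2})$ and $(2\hat v_{0}^{*}+K)$, whence positivity (resp. negativity) of $\delta^{2}J(u_{0})$ forces positivity (resp. negativity) of $\delta^{2}\tilde J^{*}(\hat v_{1}^{*})$. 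This is the identity that makes items 1 and 3 work, and item 2 is the same with the maximization restricted to $A^{*}\cap B^{*}$, where the condition $-\gamma\nabla^{2}+2\hat v_{0}^{*}>\mathbf 0$ upgrades the local statements to global ones because on $B^{*}$ the relevant functionals become globally convex/concave.

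Finally, for the global minimality/maximality equalities I would argue as follows. In item 1, $\delta^{2}\tilde J^{*}(\hat v_{1}^{*})>\mathbf 0$ and $\delta\tilde J^{*}(\hat v_{1}^{*})=\mathbf 0$ give a strict local min of $\tilde J^{*}$ at $\hat v_{1}^{*}$ on some ball $B_{r_{1}}(\hat v_{1}^{*})$, and $\delta^{2}J(u_{0})>\mathbf 0$ with $\delta J(u_{0})=\mathbf 0$ give a strict local min of $J$ at $u_{0}$ on some ball $B_{r}(u_{0})$; the chain of equalities $\min_{B_{r}(u_{0})}J=J(u_{0})=\tilde J^{*}(\hat v_{1}^{*})=\min_{B_{r_{1}}(\hat v_{1}^{*})}\tilde J^{*}=J^{*}(\hat v_{1}^{*},\hat v_{0}^{*})$ then follows from the value identity proved above (no-duality-gap in the local extremal sense). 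For item 2, $B^{*}$ is convex and on $A^{*}\cap B^{*}$ one checks $J^{*}(\cdot,v_{0}^{*})$ is such that $J_{2}^{*}$ is convex and coercive, while $J$ itself is (after adding and subtracting $\tfrac{K}{2}\|u\|_{2}^{2}$ and using $-\gamma\nabla^{2}+2\hat v_{0}^{*}>\mathbf 0$) bounded below with $u_{0}$ its global minimizer, giving the global chain. Item 3 is the mirror image of item 1 with all inequalities reversed. The main obstacle I anticipate is the second-order step: carrying out the Schur-complement computation cleanly in the finite-dimensional matrix setting and matching it against $\delta^{2}J(u_{0})$, in particular checking that the congruence factor $M$ is genuinely invertible (which uses $K+\gamma\nabla^{2}$ invertible, guaranteed by $F(u)>0$ for $u\neq\mathbf 0$, and $2\hat v_{0}^{*}+K>0$, guaranteed by $\hat v_{0}^{*}\in A^{*}$) and that no sign is lost; the first-order verifications and the value identity are essentially bookkeeping with the explicit formulas for $F^{*}$ and $G^{*}$ already derived in the preceding remark.
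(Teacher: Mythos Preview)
Your plan is essentially the paper's: Fenchel equalities for the value identity and first-order conditions, then the envelope/implicit-function argument for the second-order transfer. Two points deserve correction or sharpening.

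First, the Schur-complement outcome is not of the form $M^{T}(\delta^{2}J(u_{0}))^{-1}M$. If you actually carry out the computation with
\[
\partial^{2}_{v_{1}^{*}v_{1}^{*}}J^{*}=(K+\gamma\nabla^{2})^{-1}-(2\hat v_{0}^{*}+K)^{-1},\quad
\partial^{2}_{v_{1}^{*}v_{0}^{*}}J^{*}=\frac{2u_{0}}{2\hat v_{0}^{*}+K},\quad
\partial^{2}_{v_{0}^{*}v_{0}^{*}}J^{*}=-\frac{H}{\alpha},
\]
where $H=1+4\alpha u_{0}^{2}/(2\hat v_{0}^{*}+K)>0$, the Schur complement simplifies (after some algebra, using $2\hat v_{0}^{*}=2\alpha u_{0}^{2}-2\alpha\beta$) to
\[
\delta^{2}\tilde J^{*}(\hat v_{1}^{*})
=\frac{\delta^{2}J(u_{0})}{(K+\gamma\nabla^{2})(2\hat v_{0}^{*}+K)\,H},
\]
so it is $\delta^{2}J(u_{0})$ itself, multiplied (in the paper's informal operator-calculus sense) by a positive factor, not the inverse. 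Your hedging (``or an analogous positive conjugation'') saves the sign-transfer conclusion, but the specific Legendre-dual intuition $(\delta^{2}J)^{-1}$ is misleading here because $\tilde J^{*}$ is not the Legendre transform of $J$.

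Second, for item~2 your sketch tries to argue separately that $u_{0}$ is a global minimizer of $J$ (from $-\gamma\nabla^{2}+2\hat v_{0}^{*}>\mathbf 0$) and that $\hat v_{1}^{*}$ is a global minimizer of $J_{2}^{*}$ (from convexity), then glue them with the value identity. Proving global minimality of $J$ at $u_{0}$ directly is not straightforward, since $J$ is non-convex. The paper avoids this: it first observes that for $v_{0}^{*}\in A^{*}\cap B^{*}$ the map $v_{1}^{*}\mapsto J^{*}(v_{1}^{*},v_{0}^{*})$ is convex, so $J_{2}^{*}$ is convex as a pointwise supremum; then it proves the \emph{weak duality inequality} $J_{2}^{*}(v_{1}^{*})\le J(u)$ for all $u\in U$ and $v_{1}^{*}\in Y^{*}$ by a chain of Fenchel-type inequalities (bounding $-\tfrac12\int(v_{1}^{*}+f)^{2}/(2v_{0}^{*}+K)$ below by its affine minorant at $u$, then taking $\sup_{v_{0}^{*}}$ and $\inf_{v_{1}^{*}}$). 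Combined with $J(u_{0})=J_{2}^{*}(\hat v_{1}^{*})$, this yields both global statements simultaneously. You should plan on this route rather than a direct attack on the primal global minimum.
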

\begin{proof} From $\delta J(u_0)=\mathbf{0}$ we have

$$-\gamma \nabla^2 u_0+\alpha(u_0^2-\beta)2 u_0-f=0, \text{ in }\Omega$$
so that
\begin{eqnarray}\label{l55}\gamma \nabla^2 u_0+Ku_0&=&\alpha(u_0^2-\beta)2u_0+Ku_0-f \nonumber \\ &=& (2\hat{v}_0^*+K)u_0-f \nonumber \\ &=& \hat{v}_1^*.\end{eqnarray}

From this, we obtain
$$\frac{\hat{v}_1^*}{K+\gamma \nabla^2}-\frac{\hat{v}_1^*+f}{2\hat{v}_0^*+K}= u_0-u_0=0, \text{ in } \Omega.$$

On the other hand,
$$\hat{v}_0^*=\alpha(u_0^2-\beta)=\alpha(u_0^2-\beta+0),$$ so that
$$-\frac{\hat{v}_0^*}{\alpha}+\frac{(\hat{v}_1^*+f)^2}{(2\hat{v}_0^*+K)^2}-\beta=0,$$ and thus
$$\frac{\partial J^*(\hat{v}_1^*, \hat{v}_0^*)}{\partial v_0^*}=0.$$

From this, (\ref{l55}), from the definition of $\hat{v}_0^*$ and the concavity of $J^*(\hat{v}_1^*,v_0^*)$ in $v_0^*$, we obtain
\begin{eqnarray}\tilde{J}^*(\hat{v}_1^*)&=&\sup_{v_0^* \in A^*} J^*(\hat{v}_1^*,v_0^*)
\nonumber \\ &=& J^*(\hat{v}_1^*,\hat{v}_0^*) \nonumber \\ &=& -G^*(\hat{v}_1^*,\hat{v}_0^*)+F^*(\hat{v}_1^*) \nonumber \\ &=&
-\langle u_0,\hat{v}_1^* \rangle_{L^2}-\langle 0,\hat{v}_0^*\rangle_{L^2}+ G(u_0,0)
\nonumber \\ &&+\langle u_0,\hat{v}_1^* \rangle_{L^2}-F(u_0) \nonumber \\ &=& G(u_0,0)-F(u_0)\nonumber \\ &=& J(u_0).\end{eqnarray}

Also, for $v_1^*$ in a neighborhood of $\hat{v}_1^*$ we have that $$J^*(v_1^*)=\sup_{v_0^* \in A^*} J^*(v_1^*,v_0^*)=J^*(v_1^*,\tilde{v}_0^*),$$
where such a supremum is attained through the equation,
$$\frac{\partial J^*(v_1^*,\tilde{v}_0^*)}{\partial v_0^*}= \mathbf{0},$$
so that from the implicit function theorem we have
\begin{eqnarray}\label{h560}\frac{\partial \tilde{J}^*(v_1^*)}{\partial v_1^*}&=&\frac{\partial J^*(v_1^*,\tilde{v}_0^*)}{\partial v_1^*}+\frac{\partial J^*(v_1^*,\tilde{v}_0^*)}{\partial v_0^*}\frac{\partial \tilde{v}_0^*}{\partial v_1^*} \nonumber \\ &=&\frac{\partial J^*(v_1^*,\tilde{v}_0^*)}{\partial v_1^*}.\end{eqnarray}

Moreover, from this, joining the pieces, we get
\begin{eqnarray}
\frac{\partial \tilde{J}^*(\hat{v}_1^*)}{\partial v_1^*}&=& \frac{\partial J^*(\hat{v}_1^*,\hat{v}_0^*)}{\partial v_1^*}+
\frac{\partial J^*(\hat{v}_1^*,\hat{v}_0^*)}{\partial v_0^*} \frac{\partial \hat{v}_0^*}{\partial v_1^*} \nonumber \\ &=&
\frac{\partial J^*(\hat{v}_1^*,\hat{v}_0^*)}{\partial v_1^*} \nonumber \\ &=&
\frac{\hat{v}_1^*}{K+\gamma \nabla^2}-\frac{\hat{v}_1^*+f}{2\hat{v}_0^*+K}= u_0-u_0=0, \text{ in } \Omega.
\end{eqnarray}

Hence, from this and (\ref{h560}), we obtain
\begin{eqnarray}\label{l17}\frac{\partial^2 \tilde{J}^*(\hat{v}_1^*)}{\partial (v_1^*)^2}&=&
\frac{\partial^2 J^*(\hat{v}_1^*,\hat{v}_0^*)}{\partial v_0^*\partial v_1^*} \frac{\partial \hat{v}_0^*}{\partial v_1^*} \nonumber \\ &&
+\frac{\partial^2 J^*(\hat{v}_1^*,\hat{v}_0^*)}{\partial (v_1^*)^2}
\nonumber \\ &=& \frac{2(\hat{v}_1^*+f)}{(2\hat{v}_0^*+K)^2} \frac{\partial \hat{v}_0^*}{\partial v_1^*}
\nonumber \\ &&+\frac{1}{K+\gamma \nabla^2}-\frac{1}{2\hat{v}_0^*+K}. \end{eqnarray}

At this point we may observe that
$$\frac{(\hat{v}_1^*+f)^2}{(2\hat{v}_0^*+K)^2}-\frac{\hat{v}_0^*}{\alpha}-\beta=0,$$
so that taking the variation in $v_1^*$ of this equation in both sides, we obtain
\begin{eqnarray}
&& \frac{2(\hat{v}_1^*+f)}{(2\hat{v}_0^*+K)^2}-\frac{4(\hat{v}_1^*+f)^2}{(2\hat{v}_0^*+K)^3}\frac{\partial \hat{v}_0^*}{\partial v_1^*}
\nonumber \\ &&-\frac{1}{\alpha}\frac{\partial \hat{v}_0^*}{\partial v_1^*}=0,
\end{eqnarray}
and thus,
\begin{equation}\label{l27}\frac{\partial \hat{v}_0^*}{\partial v_1^*}=\frac{\frac{2(\hat{v}_1^*+f)}{(2\hat{v}_0^*+K)^2}}{\frac{1}{\alpha}+\frac{4(\hat{v}_1^*+f)^2}{(2\hat{v}_0^*+K)^3}},
\end{equation}

Replacing (\ref{l27}) into (\ref{l17}), we obtain
\begin{eqnarray}\frac{\partial^2 \tilde{J}^*(\hat{v}_1^*)}{\partial (v_1^*)^2}&=&
\frac{\partial^2 J^*(\hat{v}_1^*,\hat{v}_0^*)}{\partial v_0^*\partial v_1^*} \frac{\partial \hat{v}_0^*}{\partial v_1^*} \nonumber \\ &&
+\frac{\partial^2 J^*(\hat{v}_1^*,\hat{v}_0^*)}{\partial (v_1^*)^2}
\nonumber \\ &=& \frac{2(\hat{v}_1^*+f)}{(2\hat{v}_0^*+K)^2} \frac{\partial \hat{v}_0^*}{\partial v_1^*}
\nonumber \\ &&+\frac{1}{K+\gamma \nabla^2}-\frac{1}{2\hat{v}_0^*+K}
\nonumber \\ &=&
\frac{\frac{4 \alpha (\hat{v}_1^*+f)^2}{(2\hat{v}_0^*+K)^4}}{1+\alpha \frac{4(\hat{v}_1^*+f)^2}{(2\hat{v}_0^*+K)^3}}
\nonumber \\ &&+\frac{1}{K+\gamma \nabla^2}-\frac{1}{2\hat{v}_0^*+K},
\end{eqnarray}

Therefore, denoting $$H=1+\alpha \frac{4(\hat{v}_1^*+f)^2}{(2\hat{v}_0^*+K)^3}=1+\frac{4\alpha u_0^2}{2\hat{v}_0^*+K},$$ we have
\begin{eqnarray}&&\frac{\partial^2 \tilde{J}^*(\hat{v}_1^*)}{\partial (v_1^*)^2}\nonumber \\ &=&\left[\left(1+\alpha \frac{4(\hat{v}_1^*+f)^2}{(2\hat{v}_0^*+K)^3}\right)\frac{1}{K+\gamma \nabla^2} -\frac{1}{2\hat{v}_0^*+K}\right]/H
\nonumber \\ &=& \left[\left(1+\frac{4\alpha u_0^2}{2\hat{v}_0^*+K}\right)\frac{1}{K+\gamma \nabla^2}-\frac{1}{2\hat{v}_0^*+K}\right]/H
\nonumber \\ &=&(2\hat{v}_0^*+K+4\alpha u_0^2-K-\gamma \nabla^2)/[(K+\gamma \nabla^2)(\hat{v}_0^*+K)H] \nonumber
\\ &=& [-\gamma \nabla^2 +6\alpha u_0^2 -2\alpha \beta]/[(K+\gamma \nabla^2)(\hat{v}_0^*+K)H] \nonumber \\ &=& \frac{\delta^2 J(u_0)}{[(K+\gamma \nabla^2)(\hat{v}_0^*+K)H]}.
\end{eqnarray}

Summarizing, assuming $\delta^2J(u_0)> \mathbf{0},$ we obtain
$$\frac{\partial^2 \tilde{J}^*(\hat{v}_1^*)}{\partial (v_1^*)^2}>\mathbf{0},$$
so that $u_0 \in U$ is a point of local minimum for $J$ and $\hat{v}_1^*$ is a point of local minimum for $\tilde{J}^*$.

Hence,
there exist $r>0$ and $r_1>0$ such that
\begin{eqnarray}
J(u_0)&=& \min_{ u \in B_r(u_0)} J(u) \nonumber \\ &=& \min_{v_1^* \in B_{r_1}(\hat{v}_1^*)} \tilde{J}^*(v_1^*)\nonumber \\ &=&
\tilde{J}^*(\hat{v}_1^*) \nonumber \\ &=& J^*(\hat{v}_1^*,\hat{v}_0^*).
\end{eqnarray}

Assume now  $-\gamma \nabla^2+2\hat{v}_0^*> \mathbf{0},$ so that $\delta^2J(u_0)>\mathbf{0}$.

Observe that if $v_0^* \in A^*$ and $-\gamma \nabla^2 +2v_0^*> \mathbf{0},$ then $$\frac{\partial^2 J^*(v_1,v_0^*)}{\partial (v_1^*)^2}=\frac{1}{K+\gamma \nabla^2}-\frac{1}{2v_0^*+K}
> \mathbf{0},$$ so that $J^*(v_1^*,v_0^*)$ is convex in $v_1^*$, $\forall v_0^* \in A^* \cap B^*.$

Hence $$J_2^*(v_1^*)=\sup_{v_0^* \in A^* \cap B^*} J^*(v_1^*,v_0^*),$$ is convex, as the point-wise supremum of a family of convex functionals.

As above, from $\delta J(u_0)=\mathbf{0},$ we may obtain $\delta J_2^*(\hat{v}_1^*)=\mathbf{0},$ so that, since $J_2^*$ is convex, we may infer that
$$J(u_0)=J^*_2(\hat{v}_1^*)=\min_{v_1^* \in Y^*} J^*_2(v_1^*).$$

Moreover,
\begin{eqnarray}
J_2^*(\hat{v}_1^*)&=& \min_{v_1^* \in Y^*} J_2^*(v_1^*)\nonumber \\ &\leq& J_2^*(v_1^*) \nonumber \\ &=& \sup_{v_0^* \in A^* \cap B^*} J^*(v_1^*,v_0^*)
\nonumber \\ &=& \sup_{v_0^* \in A^* \cap B^*} \left\{ \frac{1}{2} \int_\Omega \frac{(v_1^*)^2}{(K+\gamma \nabla^2)}\;dx\right. \nonumber \\ &&\left.-
\frac{1}{2}\int_\Omega \frac{(v_1^*+f)^2}{2v_0^*+K}\;dx-\frac{1}{2} \int_\Omega \frac{(v_0^*)^2}{\alpha}\;dx-\beta \int_\Omega v_0^*\;dx\right\} \nonumber \\ &\leq&
\sup_{v_0^* \in A^* \cap B^*} \left\{\frac{1}{2} \int_\Omega \frac{(v_1^*)^2}{(K+\gamma \nabla^2)}\;dx\right. \nonumber \\ &&\left.-
\langle u,v_1^* +f\rangle_{L^2}+\int_\Omega(2v_0^*+K) \frac{u^2}{2}\;dx-\frac{1}{2} \int_\Omega \frac{(v_0^*)^2}{\alpha}\;dx-\beta \int_\Omega v_0^*\;dx\right\}
\nonumber \\ &\leq& \sup_{v_0^* \in Y^*} \left\{\frac{1}{2} \int_\Omega \frac{(v_1^*)^2}{(K+\gamma \nabla^2)}\;dx\right. \nonumber \\ &&\left.-
\langle u,v_1^*+f \rangle_{L^2}+\int_\Omega(2v_0^*+K) \frac{u^2}{2}\;dx-\frac{1}{2} \int_\Omega \frac{(v_0^*)^2}{\alpha}\;dx-\beta \int_\Omega v_0^*\;dx\right\}
\nonumber \\ &=& \frac{1}{2} \int_\Omega \frac{(v_1^*)^2}{(K+\gamma \nabla^2)}\;dx \nonumber \\ &&-
\langle u,v_1^* \rangle_{L^2}+\frac{\alpha}{2}\int_\Omega (u^2-\beta)^2\;dx+\frac{K}{2}\int_\Omega u^2\;dx\nonumber \\ &&-\langle u,f\rangle_{L^2},
\;\forall u \in U,\;v_1^* \in Y^*.
\end{eqnarray}

Hence \begin{eqnarray}
J_2^*(\hat{v}_1^*)&\leq& \inf_{v_1^* \in Y^*}\left\{\frac{1}{2} \int_\Omega \frac{(v_1^*)^2}{(K+\gamma \nabla^2)}\;dx \right.\nonumber \\ &&\left.-
\langle u,v_1^* \rangle_{L^2}+\frac{\alpha}{2}\int_\Omega (u^2-\beta)^2\;dx+\frac{K}{2}\int_\Omega u^2\;dx-\langle u,f\rangle_{L^2}\right\} \nonumber \\ &=&
\frac{\gamma}{2} \int_\Omega \nabla u \cdot \nabla u\;dx-\frac{K}{2}\int_\Omega u^2\;dx+\frac{\alpha}{2}\int_\Omega (u^2-\beta)^2\;dx+\frac{K}{2}\int_\Omega u^2\;dx-\langle u,f\rangle_{L^2}\nonumber \\ &=& J(u),\; \forall u \in U.\end{eqnarray}

From this and $$J(u_0)=J_2^*(\hat{v}_1^*),$$
we obtain
\begin{eqnarray}
J(u_0)&=& \min_{ u \in U} J(u) \nonumber \\ &=& \min_{v_1^* \in Y^*} J_2^*(v_1^*)\nonumber \\ &=&
J_2^*(\hat{v}_1^*) \nonumber \\ &=& J^*(\hat{v}_1^*,\hat{v}_0^*).
\end{eqnarray}

The third item may be proven similarly as the first one.

This completes the proof.

\end{proof}

\section{ A Second multi-duality principle}

In this section, in a similar context, we present a second multi-duality principle. This principle is significantly different from the previous one,
since we invert the order of variables as evaluating the extremals.

Indeed the first part of this proof is similar  to the one of the previous theorem. Important differences appears along the proof. For the sake of completeness, we present such a proof in details.

Finally, we emphasize the conclusions of this second multi-duality principle may be qualitatively found in similar form in the triality approach found in \cite{17} and other references
therein. 

\begin{thm} Let $\Omega \subset \mathbb{R}^3$ be an open, bounded, connected set with a regular (Lipschitzian) boundary denoted by
$\partial \Omega$.  Suppose $J:U \rightarrow \mathbb{R}$ is a functional defined by
$$J(u)= \frac{\gamma}{2}\int_\Omega \nabla u \cdot \nabla u\;dx + \frac{\alpha}{2}\int_\Omega (u^2-\beta)^2\;dx - \langle u,f \rangle_{L^2}$$
$\forall u \in U,$ where $\alpha,\beta, \gamma >0$, $f \in L^2(\Omega)$ and $U=W_0^{1,2}(\Omega).$

 At this point, we assume a discretized finite dimensional model version (in a finite elements or finite differences context, so that from now on, the not relabeled spaces, functions and operators refer to such a  finite dimensional approximation)  and suppose $$\delta J(u_0)=\mathbf{0}$$  where in an appropriate matrices sense, we have
$$\delta^2 J(u_0)=-\gamma \nabla^2 +6 \alpha\;u_0^2-2\alpha \beta.$$

Define,
$$F(u)=-\frac{\gamma}{2}\int_\Omega \nabla u\cdot \nabla u\;dx+\frac{K}{2}\int_\Omega u^2\;dx,$$
where $K>0$ is such that $$F(u)>0,\; \forall u \in U \text{ such that } u \neq \mathbf{0},$$
and
$$G(u,v)=\frac{\alpha}{2}\int_\Omega (u^2-\beta+v)^2\;dx+\frac{K}{2}\int_\Omega u^2\;dx-\langle u,f \rangle_{L^2},$$
so that
$$J(u)=G(u,0)-F(u),\; \forall u \in U.$$

Define also $Y=Y^*=L^2(\Omega)$, $F^*:Y^* \rightarrow \mathbb{R}$ by \begin{eqnarray}F^*(v_1^*)&=&\sup_{u \in U}\{\langle u,v_1^* \rangle_{L^2}-F(u)\}
\nonumber \\ &=& \frac{1}{2} \int_\Omega v_1^*[(K+\gamma \nabla^2)^{-1} v_1^*]\;dx,\end{eqnarray}
and $G^*: Y^* \times Y^* \rightarrow \overline{\mathbb{R}}=\mathbb{R}\cup \{+\infty\}$ by
\begin{eqnarray}G^*(v_1^*,v_0^*)&=&\sup_{(v_1,v) \in Y \times Y}\{\langle v,v_0^*\rangle_Y +\langle v_1,v_1^* \rangle_Y-G(v_1,v)\}
\nonumber \\ &=& \frac{1}{2}\int_\Omega \frac{(v_1^*+f)^2}{2 v_0^*+K}\;dx+\frac{1}{2 \alpha}\int_\Omega (v_0^*)^2\;dx
\nonumber \\ &&+\beta \int_\Omega v_0^*\;dx, \end{eqnarray}
if $v_0^* \in A^*=\{v_0^* \in Y^* \;:\; v_0^*+K>0, \text{ in } \overline{\Omega}\}.$

Let $J^*: Y^* \times Y^* \rightarrow \mathbb{R} \cup\{-\infty\}$ be such that $$J^*(v_1^*,v_0^*)=-G^*(v_1^*,v_0^*)+F^*(v_1^*).$$

Define $$\hat{v}_0^*=\alpha(u_0^2-\beta),$$
and
$$\hat{v}_1^*=(2\hat{v}_0^*+K)u_0-f.$$

Suppose $\hat{v}_0^* \in A^*.$

Under such hypotheses,
\begin{enumerate}
\item If $\delta^2J(u_0)>\mathbf{0}$ and $-\gamma \nabla^2+2\hat{v}_0^*>\mathbf{0}$, then $$\delta J_1^*(\hat{v}_0^*)=\mathbf{0}$$ and
$$\delta^2 J_1^*(\hat{v}_0^*)=-\frac{\delta^2J(u_0)}{\alpha (-\gamma \nabla^2+2\hat{v}_0^*)}<\mathbf{0},$$
so that there exist $r,r_1>0$ such that
\begin{eqnarray}
J(u_0)&=& \inf_{u \in B_r(u_0)} J(u) \nonumber \\ &=& \sup_{v_0^* \in B_{r_1}(\hat{v}_0^*)} \left\{ \inf_{v_1^* \in Y^*} J^*(v_1^*,v_0^*) \right\}
\nonumber \\ &=& \sup_{v_0^* \in B_{r_1}(\hat{v}_0^*)} J_1^*(v_0^*) \nonumber \\ &=& J_1^*(\hat{v}_0^*) \nonumber \\ &=& J^*(\hat{v}_1^*,\hat{v}_0^*),
\end{eqnarray}
where $$J_1^*(v_0^*)=\inf_{v_1^* \in Y^*} J^*(v_1^*,v_0^*).$$
\item If $\delta^2J(u_0)>\mathbf{0}$ and $-\gamma \nabla^2+2\hat{v}_0^*<\mathbf{0}$, then $$\delta J_2^*(\hat{v}_0^*)=\mathbf{0}$$ and
$$\delta^2 J_2^*(\hat{v}_0^*)=-\frac{\delta^2J(u_0)}{\alpha (-\gamma \nabla^2+2\hat{v}_0^*)}>\mathbf{0},$$
so that there exist $r,r_1>0$ such that
\begin{eqnarray}
J(u_0)&=& \inf_{u \in B_r(u_0)} J(u) \nonumber \\ &=& \inf_{v_0^* \in B_{r_1}(\hat{v}_0^*)} \left\{ \sup_{v_1^* \in Y^*} J^*(v_1^*,v_0^*) \right\}
\nonumber \\ &=& \inf_{v_0^* \in B_{r_1}(\hat{v}_0^*)} J_2^*(v_0^*) \nonumber \\ &=& J_2^*(\hat{v}_0^*) \nonumber \\ &=& J^*(\hat{v}_1^*,\hat{v}_0^*),
\end{eqnarray}
where $$J_2^*(v_0^*)=\sup_{v_1^* \in Y^*} J^*(v_1^*,v_0^*).$$

\item If $\delta^2J(u_0)<\mathbf{0}$ so that $-\gamma \nabla^2+2\hat{v}_0^*<\mathbf{0}$, then $$\delta J_2^*(\hat{v}_1^*)=\mathbf{0}$$ and
$$\delta^2 J_2^*(\hat{v}_0^*)=-\frac{\delta^2J(u_0)}{\alpha (-\gamma \nabla^2+2\hat{v}_0^*)}<\mathbf{0},$$
so that there exist $r,r_1>0$ such that
\begin{eqnarray}
J(u_0)&=& \sup_{u \in B_r(u_0)} J(u) \nonumber \\ &=& \sup_{v_0^* \in B_{r_1}(\hat{v}_0^*)} \left\{ \sup_{v_1^* \in Y^*} J^*(v_1^*,v_0^*) \right\}
\nonumber \\ &=& \sup_{v_0^* \in B_{r_1}(\hat{v}_0^*)} J_2^*(v_0^*) \nonumber \\ &=& J_2^*(\hat{v}_0^*) \nonumber \\ &=& J^*(\hat{v}_1^*,\hat{v}_0^*),
\end{eqnarray}
where $$J_2^*(v_0^*)=\sup_{v_1^* \in Y^*} J^*(v_1^*,v_0^*).$$
\end{enumerate}
\end{thm}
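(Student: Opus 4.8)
The plan is to follow the skeleton of the proof of the previous theorem, but now eliminating the dual variable $v_1^*$ rather than $v_0^*$. First I would rewrite $\delta J(u_0)=\mathbf{0}$, that is $-\gamma\nabla^2 u_0+2\alpha(u_0^2-\beta)u_0-f=\mathbf{0}$, as the two extremality identities
$$u_0=(K+\gamma\nabla^2)^{-1}\hat{v}_1^*\quad\text{and}\quad u_0=\frac{\hat{v}_1^*+f}{2\hat{v}_0^*+K},$$
together with $\hat{v}_0^*/\alpha+\beta=u_0^2=(\hat{v}_1^*+f)^2/(2\hat{v}_0^*+K)^2$, the latter coming from $\hat{v}_0^*=\alpha(u_0^2-\beta)$. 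These three relations say exactly that $\partial J^*(\hat{v}_1^*,\hat{v}_0^*)/\partial v_1^*=\mathbf{0}$ and $\partial J^*(\hat{v}_1^*,\hat{v}_0^*)/\partial v_0^*=\mathbf{0}$, so $(\hat{v}_1^*,\hat{v}_0^*)$ is a critical point of $J^*$. Substituting the extremal relations into $J^*=-G^*+F^*$ and using the Fenchel identities $G^*(\hat{v}_1^*,\hat{v}_0^*)=\langle u_0,\hat{v}_1^*\rangle_{L^2}-G(u_0,0)$ and $F^*(\hat{v}_1^*)=\langle u_0,\hat{v}_1^*\rangle_{L^2}-F(u_0)$ --- both holding because, the relevant optimizations being quadratic, the suprema defining $G^*(\hat{v}_1^*,\hat{v}_0^*)$ and $F^*(\hat{v}_1^*)$ are attained precisely at $(u_0,0)$ and $u_0$ --- one obtains at once $J^*(\hat{v}_1^*,\hat{v}_0^*)=G(u_0,0)-F(u_0)=J(u_0)$; this portion is identical to the first theorem.

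Next I would perform the partial reduction in $v_1^*$. The map $v_1^*\mapsto J^*(v_1^*,v_0^*)$ is globally quadratic, with
$$\frac{\partial^2 J^*}{\partial(v_1^*)^2}=\frac{1}{K+\gamma\nabla^2}-\frac{1}{2v_0^*+K}=\frac{-\gamma\nabla^2+2v_0^*}{(K+\gamma\nabla^2)(2v_0^*+K)},$$
so the hypothesis $-\gamma\nabla^2+2\hat{v}_0^*>\mathbf{0}$ (respectively $<\mathbf{0}$), being an open condition, furnishes $r_1>0$ with $B_{r_1}(\hat{v}_0^*)\subset A^*$ on which $J^*(\cdot,v_0^*)$ is strictly convex (respectively strictly concave); hence $J_1^*(v_0^*)=\inf_{v_1^*\in Y^*}J^*(v_1^*,v_0^*)$ (respectively $J_2^*(v_0^*)=\sup_{v_1^*\in Y^*}J^*(v_1^*,v_0^*)$) is finite and attained at the unique $\tilde{v}_1^*(v_0^*)$ solving $\partial_{v_1^*}J^*=\mathbf{0}$, with $\tilde{v}_1^*(\hat{v}_0^*)=\hat{v}_1^*$. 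By the implicit function theorem and the vanishing of $\partial_{v_1^*}J^*$ at this stationary point, $\delta J_1^*(\hat{v}_0^*)=\partial_{v_0^*}J^*(\hat{v}_1^*,\hat{v}_0^*)=\mathbf{0}$, and the Schur-complement formula for the Hessian of a partially optimized quadratic form gives
$$\delta^2 J_1^*(\hat{v}_0^*)=\left[\frac{\partial^2 J^*}{\partial(v_0^*)^2}-\frac{(\partial^2 J^*/\partial v_0^*\partial v_1^*)^2}{\partial^2 J^*/\partial(v_1^*)^2}\right]_{(\hat{v}_1^*,\hat{v}_0^*)}.$$
Inserting the explicit blocks, namely $\partial^2 J^*/\partial(v_0^*)^2=-1/\alpha-4u_0^2/(2\hat{v}_0^*+K)$ and $\partial^2 J^*/\partial v_0^*\partial v_1^*=2u_0/(2\hat{v}_0^*+K)$ (using $\hat{v}_1^*+f=(2\hat{v}_0^*+K)u_0$), together with the expression above, and simplifying --- the collapse $(-\gamma\nabla^2+2\hat{v}_0^*)+(K+\gamma\nabla^2)=2\hat{v}_0^*+K$ cancels two terms and leaves $-1/\alpha-4u_0^2/(-\gamma\nabla^2+2\hat{v}_0^*)$ --- one gets
$$\delta^2 J_1^*(\hat{v}_0^*)=-\frac{-\gamma\nabla^2+2\hat{v}_0^*+4\alpha u_0^2}{\alpha(-\gamma\nabla^2+2\hat{v}_0^*)}=-\frac{\delta^2 J(u_0)}{\alpha(-\gamma\nabla^2+2\hat{v}_0^*)},$$
since $\delta^2 J(u_0)-(-\gamma\nabla^2+2\hat{v}_0^*)=4\alpha u_0^2$. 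The same computation, with $\inf$ replaced by $\sup$, yields the identical formula for $\delta^2 J_2^*(\hat{v}_0^*)$.

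Finally I would close the three items by sign bookkeeping. In each case, $\delta J(u_0)=\mathbf{0}$ together with the sign of $\delta^2 J(u_0)$ gives, by the classical second-order sufficient condition for a local extremum, that $u_0$ is a strict local minimizer of $J$ on some $B_r(u_0)$ in items 1 and 2, and a strict local maximizer in item 3. From the formula just derived, $\delta^2 J_1^*(\hat{v}_0^*)<\mathbf{0}$ when $\delta^2 J(u_0)>\mathbf{0}$ and $-\gamma\nabla^2+2\hat{v}_0^*>\mathbf{0}$ (item 1); $\delta^2 J_2^*(\hat{v}_0^*)>\mathbf{0}$ when $\delta^2 J(u_0)>\mathbf{0}$ and $-\gamma\nabla^2+2\hat{v}_0^*<\mathbf{0}$ (item 2); and $\delta^2 J_2^*(\hat{v}_0^*)<\mathbf{0}$ when $\delta^2 J(u_0)<\mathbf{0}$, where one notes that $\delta^2 J(u_0)<\mathbf{0}$ forces $-\gamma\nabla^2+2\hat{v}_0^*=\delta^2 J(u_0)-4\alpha u_0^2<\mathbf{0}$ (item 3). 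Hence $\hat{v}_0^*$ is a strict local maximizer of $J_1^*$ in item 1, a strict local minimizer of $J_2^*$ in item 2, and a strict local maximizer of $J_2^*$ in item 3. Combining: for item 1, $J(u_0)=\inf_{u\in B_r(u_0)}J(u)$, $J(u_0)=J^*(\hat{v}_1^*,\hat{v}_0^*)=J_1^*(\hat{v}_0^*)$ (because $\hat{v}_1^*$ is the global minimizer of the convex $J^*(\cdot,\hat{v}_0^*)$), and $J_1^*(\hat{v}_0^*)=\sup_{v_0^*\in B_{r_1}(\hat{v}_0^*)}J_1^*(v_0^*)$; for items 2 and 3 one replaces $J_1^*$ by $J_2^*$, convexity by concavity, and the $\inf/\sup$ according to the signs just recorded. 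This is exactly the chain of equalities asserted in each item.

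The step I expect to be the main obstacle is the Schur-complement simplification above: as written, the ``fractions'' appearing in $\partial^2 J^*/\partial(v_1^*)^2$ and in $\delta^2 J_1^*$ mix the multiplication operators $\hat{v}_0^*$ and $u_0^2$ with the differential operator $\nabla^2$, which do not commute, so those manipulations are only formal. I would make them rigorous by working entirely inside the finite-dimensional discretized model declared in the statement, where $J^*$ is a genuine $C^2$ function of the vectors $(v_1^*,v_0^*)$, its Hessian is a symmetric matrix, the block identity for the Hessian of the partial minimum (or maximum) is standard linear algebra, and all positivity and negativity assertions refer to symmetric matrices in the usual sense.
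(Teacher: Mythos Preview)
Your proposal is correct and follows essentially the same route as the paper's proof. The only cosmetic difference is that the paper computes $\partial\tilde v_1^*/\partial v_0^*$ by explicitly differentiating the stationarity equation $\partial_{v_1^*}J^*=0$ and then substitutes into $\partial^2\tilde J^*/\partial(v_0^*)^2=\partial^2_{v_0^*v_0^*}J^*+\partial^2_{v_0^*v_1^*}J^*\cdot\partial_{v_0^*}\tilde v_1^*$, whereas you invoke the resulting Schur-complement identity directly; the algebraic simplification to $-\delta^2 J(u_0)/[\alpha(-\gamma\nabla^2+2\hat v_0^*)]$ and the subsequent sign bookkeeping for the three items are identical in both accounts.
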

\begin{proof} From $\delta J(u_0)=\mathbf{0}$ we have

$$-\gamma \nabla^2 u_0+\alpha(u_0^2-\beta)2 u_0-f=0, \text{ in }\Omega$$
so that
\begin{eqnarray}\label{l5}\gamma \nabla^2 u_0+Ku_0&=&\alpha(u_0^2-\beta)2u_0+Ku_0-f \nonumber \\ &=& (2\hat{v}_0^*+K)u_0-f \nonumber \\ &=& \hat{v}_1^*.\end{eqnarray}

From this, we obtain
$$\frac{\hat{v}_1^*}{K+\gamma \nabla^2}-\frac{\hat{v}_1^*+f}{2\hat{v}_0^*+K}= u_0-u_0=0, \text{ in } \Omega.$$

Thus,
$$\frac{\partial J^*(\hat{v}_1^*, \hat{v}_0^*)}{\partial v_1^*}=0.$$

Define $$\tilde{J}(v_0^*)=J^*(\tilde{v}_1^*,v_0^*),$$ where $\tilde{v}_1^* \in Y^*$ is such that
$$\frac{\partial J^*(\tilde{v}_1^*, v_0^*)}{\partial v_1^*}=0.$$

Observe that, from $$\frac{\partial^2 J^*(v_1^*,v_0^*)}{\partial (v_1^*)^2}=\frac{1}{K+\gamma \nabla^2}-\frac{1}{2v_0^*+K},$$
we have that, if $$-\gamma \nabla^2+2v_0^*> \mathbf{0},$$ then
$$\tilde{J}(v_0^*)=J^*(\tilde{v}_1^*,v_0^*)=\inf_{v_1^* \in Y^*} J^*(v_1^*,v_0^*),$$
whereas if $$-\gamma \nabla^2+2v_0^*< \mathbf{0},$$ then
$$\tilde{J}(v_0^*)=J^*(\tilde{v}_1^*,v_0^*)=\sup_{v_1^* \in Y^*} J^*(v_1^*,v_0^*).$$

From (\ref{l5}) and from the definition of $\hat{v}_0^*$, we obtain
\begin{eqnarray}\tilde{J}^*(\hat{v}_0^*)&=&
 J^*(\hat{v}_1^*,\hat{v}_0^*) \nonumber \\ &=& -G^*(\hat{v}_1^*,\hat{v}_0^*)+F^*(\hat{v}_1^*) \nonumber \\ &=&
-\langle u_0,\hat{v}_1^* \rangle_{L^2}-\langle 0,\hat{v}_0^*\rangle_{L^2}+ G(u_0,0)
\nonumber \\ &&+\langle u_0,\hat{v}_1^* \rangle_{L^2}-F(u_0) \nonumber \\ &=& G(u_0,0)-F(u_0)\nonumber \\ &=& J(u_0).\end{eqnarray}

From the implicit function theorem we have
\begin{eqnarray}\label{h56}\frac{\partial \tilde{J}^*(v_0^*)}{\partial v_0^*}&=&\frac{\partial J^*(\tilde{v}_1^*,v_0^*)}{\partial v_0^*}+\frac{\partial J^*(\tilde{v}_1^*,v_0^*)}{\partial v_1^*}\frac{\partial \tilde{v}_1^*}{\partial v_0^*} \nonumber \\ &=&\frac{\partial J^*(\tilde{v}_1^*,v_0^*)}{\partial v_0^*}.\end{eqnarray}

Moreover, from this, joining the pieces, we get
\begin{eqnarray}
\frac{\partial \tilde{J}^*(\hat{v}_0^*)}{\partial v_0^*}&=& \frac{\partial J^*(\hat{v}_1^*,\hat{v}_0^*)}{\partial v_0^*}+
\frac{\partial J^*(\hat{v}_1^*,\hat{v}_0^*)}{\partial v_1^*} \frac{\partial \hat{v}_1^*}{\partial v_0^*} \nonumber \\ &=&
\frac{\partial J^*(\hat{v}_1^*,\hat{v}_0^*)}{\partial v_0^*} \nonumber \\ &=&
\frac{(\hat{v}_1^*+f)^2}{(2\hat{v}_0^*+K)^2}-\frac{\hat{v}_0^*}{\alpha}-\beta \nonumber \\ &=& u_0^2-\frac{\hat{v}_0^*}{\alpha}-\beta
\nonumber \\ &=& 0, \text{ in } \Omega,
\end{eqnarray}
since $\hat{v}_0^*=\alpha (u_0^2-\beta), \; \text{ in } \Omega.$

Hence, from this and (\ref{h56}), we obtain
\begin{eqnarray}\label{l1}\frac{\partial^2 \tilde{J}^*(\hat{v}_0^*)}{\partial (v_0^*)^2}&=&
\frac{\partial^2 J^*(\hat{v}_1^*,\hat{v}_0^*)}{\partial v_0^*\partial v_1^*} \frac{\partial \hat{v}_1^*}{\partial v_0^*} \nonumber \\ &&
+\frac{\partial^2 J^*(\hat{v}_1^*,\hat{v}_0^*)}{\partial (v_0^*)^2}
\nonumber \\ &=& \frac{2(\hat{v}_1^*+f)}{(2\hat{v}_0^*+K)^2} \frac{\partial \hat{v}_1^*}{\partial v_0^*}
\nonumber \\ &&-4\frac{(\hat{v}_1^*+f)^2}{(2\hat{v}_0^*+K)^3}-\frac{1}{\alpha}. \end{eqnarray}

At this point we may observe that
$$\frac{\hat{v}_1^*}{K +\gamma \nabla^2}-\frac{(\hat{v}_1^*+f)}{(2\hat{v}_0^*+K)}=0,$$
so that taking the variation in $v_0^*$ of this equation in both sides, we obtain
\begin{eqnarray}
&& \frac{\frac{\partial \hat{v}_1^*}{\partial v_0^*}}{(K+\gamma \nabla^2)}-\frac{\frac{\partial \hat{v}_1^*}{\partial v_0^*}}{(2\hat{v}_0^*+K)}
+\frac{2(\hat{v}_1^*+f)}{(2\hat{v}_0^*+K)^2} = 0,
\end{eqnarray}
and thus, recalling that $$u_0=\frac{(\hat{v}_1^*+f)}{(2\hat{v}_0^*+K)},$$ we get
\begin{equation}\label{l2}\frac{\partial \hat{v}_1^*}{\partial v_0^*}=-\frac{ \frac{ (K+\gamma \nabla^2)(2u_0)}{(2\hat{v}_0^*+K)}} {1-\frac{K+\gamma \nabla^2}{2\hat{v}_0^*+K}},
\end{equation}

Replacing (\ref{l2}) into (\ref{l1}), we obtain
\begin{eqnarray}\frac{\partial^2 \tilde{J}^*(\hat{v}_0^*)}{\partial (v_0^*)^2}&=&
\frac{\partial^2 J^*(\hat{v}_1^*,\hat{v}_0^*)}{\partial v_0^*\partial v_1^*} \frac{\partial \hat{v}_1^*}{\partial v_0^*} \nonumber \\ &&
+\frac{\partial^2 J^*(\hat{v}_1^*,\hat{v}_0^*)}{\partial (v_0^*)^2}
\nonumber \\ &=& \frac{2(\hat{v}_1^*+f)}{(2\hat{v}_0^*+K)^2} \frac{\partial \hat{v}_1^*}{\partial v_0^*}
\nonumber \\ && -\frac{4(\hat{v}_1^*+f)^2}{(2\hat{v}_0^*+K)^3}-\frac{1}{\alpha}
\nonumber \\ &=&
-\frac{ \frac{ K+\gamma \nabla^2}{(2\hat{v}_0^*+K)^2} (4u_0^2)}{1-\frac{K+\gamma \nabla^2}{2\hat{v}_0^*+K}}-
\frac{1}{\alpha}-\frac{4u_0^2}{2\hat{v}_0^*+K}.
\end{eqnarray}
Therefore,
\begin{eqnarray}\label{pl1}&&\frac{\partial^2 \tilde{J}^*(\hat{v}_0^*)}{\partial (v_0^*)^2} \nonumber \\ &=&
-\frac{1}{\alpha}-\frac{4u_0^2}{2\hat{v}_0^*+K} \nonumber \\ &&
- \frac{K+\gamma \nabla^2}{2v_0^*+K} \frac{4u_0^2}{(2\hat{v}_0^*-\gamma \nabla^2)} \nonumber \\
&=& -\frac{1}{\alpha}-\frac{4u_0^2}{(2\hat{v}_0+K)}\left(1+\frac{K+\gamma \nabla^2}{2\hat{v}_0^*-\gamma \nabla^2}\right)
\nonumber \\ &=& -\frac{1}{\alpha}-\frac{4u_0^2}{(2\hat{v}_0^*-\gamma \nabla^2)} \nonumber \\ &=&
\frac{\gamma \nabla^2-2\hat{v}_0^*-4\alpha u_0^2}{\alpha (-\gamma \nabla^2+2\hat{v}_0^*)} \nonumber \\
&=& \frac{\gamma \nabla^2-6\alpha u_0^2+2\alpha\beta}{\alpha (-\gamma \nabla^2+2\hat{v}_0^*)}
\nonumber \\ &=& -\frac{\delta^2J(u_0)}{\alpha (-\gamma \nabla^2+2\hat{v}_0^*)}.
\end{eqnarray}

Assume now $\delta^2J(u_0)>\mathbf{0}$ and $-\gamma \nabla^2+2 \hat{v}_0^*>\mathbf{0}.$

From (\ref{pl1}), we obtain

$$\delta^2J^*_1(\hat{v}_0^*)=\frac{\partial^2 \tilde{J}^*(\hat{v}_0^*)}{\partial (v_0^*)^2}=-\frac{\delta^2J(u_0)}{\alpha (-\gamma \nabla^2+2\hat{v}_0^*)}<\mathbf{0}.$$

Summarizing,
$$\delta^2J^*_1(\hat{v}_0^*)<\mathbf{0},$$
so that $u_0 \in U$ is a point of local minimum for $J$ and $\hat{v}_0^*$ is a point of local maximum for $J_1^*$.

Hence,
there exist $r>0$ and $r_1>0$ such that
\begin{eqnarray}
J(u_0)&=& \min_{ u \in B_r(u_0)} J(u) \nonumber \\ &=& \max_{v_0^* \in B_{r_1}(\hat{v}_0^*)} J_1^*(v_0^*)\nonumber \\ &=&
J_1^*(\hat{v}_0^*) \nonumber \\ &=& J^*(\hat{v}_1^*,\hat{v}_0^*).
\end{eqnarray}

The remaining items may be proven similarly from (\ref{pl1}).

This completes the proof.

\end{proof}

\section{Another duality principle for global optimization}

Our next result is another duality principle suitable for global optimization. The optimality criterion here presented may be found in \cite{17}.
\begin{thm} Let $\Omega \subset \mathbb{R}^3$ be an open, bounded, connected set with a  Lipschitzian boundary denoted by
$\partial \Omega$. Consider the Ginzburg-Landau energy given by
$J:U \rightarrow \mathbb{R}$, where
\begin{eqnarray}J(u)&=&\frac{\gamma}{2}\int_\Omega \nabla u \cdot \nabla u\;dx \nonumber \\
&&+\frac{\alpha}{2}\int_\Omega (u^2-1)^2\;dx-\langle u,f \rangle_{L^2},
\end{eqnarray}
where $\alpha,\;\gamma>0$, $f \in L^2(\Omega)$ and
$$U=W_0^{1,2}(\Omega)=\{u \in W^{1,2}(\Omega)\;:\; u=0, \; \text{ on } \partial \Omega\}.$$

We also denote, for a finite dimensional discretized version of this problem, in a finite elements or finite differences context,

$$J(u)=-F(u)+G_1(u,0),$$
where
$$F(u)=-\frac{\gamma}{2}\int_\Omega \nabla u \cdot \nabla u\;dx+\frac{K}{2}\int_\Omega u^2\;dx,$$
and
$$G_1(u,v)=\frac{\alpha}{2}\int_\Omega(u^2-1+v)^2\;dx+\frac{K}{2}\int_\Omega u^2\;dx-\langle u,f \rangle_{L^2},$$
where
$K>0$ is such that $F(u) > 0, \; \forall u \in U, \text{ such that } u\neq \mathbf{0}.$

And where generically,
$$\langle h,g\rangle_{L^2}=\int_\Omega hg\;dx,\; \forall h,g \in L^2(\Omega).$$

We define,
\begin{eqnarray}F^*(z^*)&=& \sup_{u \in U}\{\langle z^*,u\rangle_{L^2}-F(u)\}
\nonumber \\ &=& \sup_{u \in U}\{\langle z^*,u\rangle_{L^2}+\frac{\gamma}{2}\int_\Omega \nabla u\cdot \nabla u\;dx
\nonumber \\ &&-\frac{K}{2}\int_\Omega u^2\;dx\} \nonumber \\ &=&
\frac{1}{2}\int_\Omega z^*((K I_d+\gamma\nabla^2)^{-1} z^*)\;dx,
\end{eqnarray}
where $I_d$ denotes the identity matrix.

Also,
\begin{eqnarray}
G_1^*(z^*,v_1^*)&=& \sup_{(u,v) \in U \times L^2}\{ \langle z^*,u \rangle_{L^2}+\langle v_1^*,v\rangle_{L^2}
\nonumber \\ &&+\langle u,f \rangle_{L^2}-\frac{\alpha}{2}\int_\Omega (u^2-1+v)^2\;dx-\frac{K}{2}\int_\Omega u^2\;dx\}
\nonumber \\ &=& \frac{1}{2}\int_\Omega \frac{(z^*+f)^2}{2v_1^*+K}\;dx +\frac{1}{2\alpha}\int_\Omega (v_1^*)^2\;dx
\nonumber \\ &&+\int_\Omega v_1^*\;dx \nonumber \\ &\equiv& G_{1L}^*(z^*,v_1^*),
\end{eqnarray}
if $v^*_1 \in B_1,$
where
$$B_1=\{v_1^* \in Y^*\;:\; 2v_1^*+K>0, \text{ in } \overline{\Omega}\}$$
and $G_{1L}^*$ stands for the Legendre transform of $G_1$.

We also denote,
\begin{eqnarray}
B_2&=& \{v_1^* \in Y^* \;:\; \nonumber \\ &&
\frac{\gamma}{2}\int_\Omega \nabla u \cdot \nabla u\;dx+\int_\Omega v_1^* u^2\;dx > 0, \nonumber
\\ && \forall u \in U \text{ such that } u \neq \mathbf{0}\},
\end{eqnarray}
$$C^*=B_1 \cap B_2,$$
where $$Y=Y^*=L^2(\Omega).$$
Under such hypotheses,
\begin{eqnarray}\inf_{u \in U} J(u) &\geq& \sup_{v_1^* \in C^*}\{\inf_{z^* \in Y^*}\{ J^*(z^*,v_1^*)\}\}
\nonumber \\ &=& \sup_{v_1^* \in C^*} \tilde{J}(v_1^*),\end{eqnarray}
where, $$J^*(z^*,v_1^*)=F^*(z^*)-G_1^*(z^*,v_1^*)$$
and
$$\tilde{J}(v_1^*)=\inf_{z^* \in Y^*} J^*(z^*,v_1^*).$$
Moreover, if there exists a critical point $(z_0^*,(v_0^*)_1) \in C^* \times Y^*$, so that
$$\delta J^*(z^*_0,(v_0^*)_1)=0,$$
then, denoting $$u_0=\frac{z_0^*+f}{2(v_0^*)_1+K}$$ we have that
\begin{eqnarray}
J(u_0)&=& \min_{u \in U} J(u) \nonumber \\ &=& \max_{v^*_1 \in C^*} \tilde{J}^*(v^*_1) \nonumber \\ &=& \tilde{J}^*((v_0^*)_1)
\nonumber \\ &=& J^*(z_0^*,(v_0^*)_1).
\end{eqnarray}

\end{thm}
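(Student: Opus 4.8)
The plan is to first prove the weak-duality inequality by a Toland-type device, and then to use that $J^*(\cdot,v_1^*)$ is actually convex on $C^*$ in order to promote the given critical point of $J^*$ into a genuine global minimizer of $J$. For the weak duality I would use two facts. In the discretized setting $F$ is a positive definite quadratic form — this is exactly what the hypothesis $F(u)>0$ for $u\neq\mathbf{0}$ encodes, i.e. $KI+\gamma\nabla^2>\mathbf{0}$ — so $F=F^{**}$ and for every $u\in U$ one has $F(u)=\langle z^*(u),u\rangle_{L^2}-F^*(z^*(u))$ with $z^*(u):=\delta F(u)=(KI+\gamma\nabla^2)u$. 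Independently, the Fenchel--Young inequality $G_1(u,v)+G_1^*(z^*,v_1^*)\ge\langle z^*,u\rangle_{L^2}+\langle v_1^*,v\rangle_{L^2}$ holds for all arguments and needs no convexity of $G_1$, since $G_1^*$ is by construction a supremum. Taking $v=0$, $z^*=z^*(u)$ and an arbitrary $v_1^*\in C^*$, and subtracting the identity for $F$, I obtain
$$J(u)=G_1(u,0)-F(u)\ \ge\ F^*(z^*(u))-G_1^*(z^*(u),v_1^*)=J^*(z^*(u),v_1^*)\ \ge\ \inf_{z^*\in Y^*}J^*(z^*,v_1^*)=\tilde{J}(v_1^*).$$
Taking the infimum over $u\in U$ and then the supremum over $v_1^*\in C^*$ gives $\inf_{u}J(u)\ge\sup_{v_1^*\in C^*}\tilde{J}(v_1^*)$, the first assertion.

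Next I would compute $\partial^2 J^*(z^*,v_1^*)/\partial(z^*)^2=(KI+\gamma\nabla^2)^{-1}-(2v_1^*+K)^{-1}$ and note that for $v_1^*\in C^*=B_1\cap B_2$ we have $2v_1^*+K>\mathbf{0}$ and $(2v_1^*+K)-(KI+\gamma\nabla^2)=2v_1^*-\gamma\nabla^2>\mathbf{0}$, hence $2v_1^*+K>KI+\gamma\nabla^2>\mathbf{0}$ and, inverting, $(KI+\gamma\nabla^2)^{-1}-(2v_1^*+K)^{-1}>\mathbf{0}$. Thus $z^*\mapsto J^*(z^*,v_1^*)$ is a strictly convex, coercive quadratic for each $v_1^*\in C^*$, so it attains its infimum at the unique $z^*$ solving $\partial J^*/\partial z^*=\mathbf{0}$. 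Now for the given critical point $(z_0^*,(v_0^*)_1)$ (whose relevant component lies in $C^*$) and $u_0=(z_0^*+f)/(2(v_0^*)_1+K)$: the equation $\partial J^*(z_0^*,(v_0^*)_1)/\partial z^*=\mathbf{0}$ reads $(KI+\gamma\nabla^2)^{-1}z_0^*=u_0$, so $z_0^*=(KI+\gamma\nabla^2)u_0$ and $(2(v_0^*)_1+K)u_0=z_0^*+f$; the equation $\partial J^*(z_0^*,(v_0^*)_1)/\partial v_1^*=\mathbf{0}$ reads $-u_0^2+(v_0^*)_1/\alpha+1=0$, i.e. $(v_0^*)_1=\alpha(u_0^2-1)$. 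Eliminating the dual variables yields $-\gamma\nabla^2u_0+2\alpha(u_0^2-1)u_0-f=\mathbf{0}$, i.e. $\delta J(u_0)=\mathbf{0}$. Moreover these same relations say precisely that $u_0$ attains the supremum defining $F^*(z_0^*)$ and that $(u_0,0)$ attains the one defining $G_1^*(z_0^*,(v_0^*)_1)$ (the latter because $(v_0^*)_1\in B_1$ makes the $u$-maximization a concave problem), so $F^*(z_0^*)=\langle z_0^*,u_0\rangle_{L^2}-F(u_0)$ and $G_1^*(z_0^*,(v_0^*)_1)=\langle z_0^*,u_0\rangle_{L^2}-G_1(u_0,0)$, whence $J^*(z_0^*,(v_0^*)_1)=G_1(u_0,0)-F(u_0)=J(u_0)$.

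Finally I would close the loop. By the convexity step $z_0^*$ is the global minimizer of $J^*(\cdot,(v_0^*)_1)$, so $\tilde{J}((v_0^*)_1)=J^*(z_0^*,(v_0^*)_1)=J(u_0)$, and combining with the weak duality,
$$\inf_{u\in U}J(u)\ \ge\ \sup_{v_1^*\in C^*}\tilde{J}(v_1^*)\ \ge\ \tilde{J}((v_0^*)_1)\ =\ J(u_0)\ \ge\ \inf_{u\in U}J(u),$$
so all of these are equalities and the extrema are attained at $u_0$ and $(v_0^*)_1$, giving $J(u_0)=\min_uJ(u)=\max_{v_1^*\in C^*}\tilde{J}(v_1^*)=\tilde{J}((v_0^*)_1)=J^*(z_0^*,(v_0^*)_1)$. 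I expect the main obstacle to be exactly this final mechanism: since $J$ is non-convex a critical point need not be optimal, and the argument succeeds only because the partial conjugate $J^*(\cdot,v_1^*)$ is convex precisely on $C^*$, so that first-order criticality there forces global minimality in $z^*$ and lets the chain of inequalities collapse; a secondary point requiring care is the verification that the critical point corresponds to the $v=0$ slice of the perturbed problem, i.e. that the relevant Legendre suprema are genuinely attained at $u_0$ and $(u_0,0)$, which uses $(v_0^*)_1\in B_1$ together with the explicit forms of $F^*$ and $G_1^*$.
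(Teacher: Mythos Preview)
Your proposal is correct and follows the same overall architecture as the paper's proof: weak duality via Fenchel--Young for $G_1^*$ combined with $F=F^{**}$, then the critical-point identities giving $J^*(z_0^*,(v_0^*)_1)=J(u_0)$, then the sandwich $\inf J\ge\sup\tilde J\ge\tilde J((v_0^*)_1)=J(u_0)\ge\inf J$. The one genuine difference is in how you justify $\tilde J((v_0^*)_1)=\inf_{z^*}J^*(z^*,(v_0^*)_1)=J^*(z_0^*,(v_0^*)_1)$. The paper does this indirectly: it introduces the auxiliary quantity $\alpha_1=\inf_{u}\{\tfrac{\gamma}{2}\int|\nabla u|^2+\int v_1^*u^2-\langle u,f\rangle\}$, shows $F^*(z^*)-\tfrac12\int(z^*+f)^2/(2v_1^*+K)\ge\alpha_1$ for all $z^*$, and then invokes Toland's general duality result from \cite{12a} to conclude that the infimum over $z^*$ equals $\alpha_1$ and is attained. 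You instead compute the Hessian $\partial^2 J^*/\partial(z^*)^2=(KI+\gamma\nabla^2)^{-1}-(2v_1^*+K)^{-1}$ and use the operator monotonicity of inversion together with $B_2\Rightarrow 2v_1^*-\gamma\nabla^2>\mathbf{0}$ to get strict convexity directly, so that first-order criticality at $z_0^*$ already gives the global infimum. Your route is more elementary and self-contained (no external citation needed), and it makes transparent exactly why the set $C^*$ is the right constraint: it is precisely the region where $J^*(\cdot,v_1^*)$ is convex. The paper's route, on the other hand, makes the connection to Toland duality explicit, which is one of the paper's stated aims.
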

\begin{proof} Observe that
\begin{eqnarray}
&&G_1^*(z^*,v_1^*) \nonumber \\ &\geq&
\langle z^*,u \rangle_{L^2}+\langle v_1^*,v\rangle_{L^2}-G_1(u,v),
\end{eqnarray}
$\forall v_1^* \in C^*,\; u \in U,\; v \in Y,\; z^* \in Y^*.$

Thus,
\begin{eqnarray} &&-\langle z^*,u \rangle_{L^2}+G_1(u,0) \nonumber \\ &\geq&
-G_1^*(z^*,v_1^*), \end{eqnarray}
$\forall u \in U, \; v^*_1 \in C^*,\;z^* \in Y^*,$
so that
\begin{eqnarray} &&F^*(z^*)-\langle z^*,u \rangle_{L^2}+G_1(u,0) \nonumber \\ &\geq&
F^*(z^*)-G_1^*(z^*,v_1^*), \end{eqnarray}
$\forall u \in U, \; v^*_1 \in C^*,\;z^* \in Y^*.$

Hence,
\begin{eqnarray}J(u)&=& -F(u)+G_1(u,0) \nonumber \\ &=&\inf_{z^* \in Y^*}\{ F^*(z^*)-\langle z^*,u \rangle_{L^2}\}+G_1(u,0) \nonumber \\ &\geq&
\inf_{z^* \in Y^*} \{F^*(z^*)-G_1^*(z^*,v_1^*)\}\nonumber \\
&=& \inf_{z^* \in Y^*} J^*(z^*,v_1^*) \nonumber \\ &=& \tilde{J}^*(v_1^*), \end{eqnarray}
$\forall u \in U,\; v_1^* \in C^*.$

Thus,
\begin{equation}\label{us3099}\inf_{u \in U} J(u)\geq \sup_{v^*_1 \in C^*} \tilde{J}^*(v^*_1).\end{equation}

Now suppose $(z_0^*,(v_0^*)_1) \in Y^* \times C^*$ is such that
$$\delta  J^*(z_0^*,(v_0^*)_1)=\mathbf{0}.$$
From the variation in $z^*$ we obtain,
$$(K I_d+\gamma\nabla^2)^{-1}(z_0^*)=\frac{z_0^*+f}{2(v_0^*)_1+K}=u_0,$$
so that
$$z_0^*=(K I_d+\gamma \nabla^2) u_0,$$
and
\begin{equation}\label{br509} z_0^*+f=(2(v_0^*)_1+K)u_0.\end{equation}
Thus,
\begin{equation}\label{us3100}F^*(z_0^*)=\langle z_0^*,u_0 \rangle_{L^2}-F(u_0).\end{equation}

On the other hand, from the variation in $v_1^*$, we have,
$$\frac{[z_0^*+f]^2}{[2(v_0^*)_1+K]^2}-\frac{(v_0^*)_1}{\alpha} -1=0,$$
so that
$$(v_0^*)_1=\alpha(u_0^2-1),$$
and hence, from this and (\ref{br509}) we have,
$$z_0^*+f=\alpha(u_0^2-1) 2u_0+Ku_0,$$ and
\begin{equation}\label{us3180}G_1^*(z_0^*,(v_0^*)_1)=\langle z_0^*,u_0 \rangle_{L^2}-G_1(u_0,0).\end{equation}

From (\ref{us3100}) and (\ref{us3180}), we obtain
\begin{eqnarray}\label{us3190}
J(u_0)&=& -F(u_0)+G_1(u_0,0) \nonumber \\ &=&
F^*(z_0^*)-G_1^*(z_0^*,(v_0^*)_1) \nonumber \\
&=& J^*(z_0^*,(v_0^*)_1).
\end{eqnarray}

Now, let $$v^*_1 \in C^*.$$

Observe that, in such a case,
$$\frac{\gamma}{2}\int_\Omega \nabla u \cdot \nabla u \;dx +\int_\Omega v_1^* u^2\;dx > 0,$$
$\forall u \in U, \text{ such that } u \neq \mathbf{0}.$

Denoting
\begin{eqnarray}
\alpha_1
 = \inf_{u \in U} \left\{\frac{\gamma}{2}\int_\Omega \nabla u \cdot \nabla u \;dx +\int_\Omega v_1^* u^2\;dx
 -\langle u,f \rangle_{L^2} \right\},
 \end{eqnarray}
 we have
\begin{eqnarray}
&& \int_\Omega v_1^* u^2\;dx+\frac{K}{2}\int_\Omega u^2 \;dx
-\langle u,f \rangle_{L^2}-\langle z^*,u \rangle_{L^2}
\nonumber \\ &\geq& -\frac{\gamma}{2}\int_\Omega \nabla u \cdot \nabla u \;dx+\frac{K}{2}\int_\Omega u^2 \;dx -\langle z^*,u \rangle_{L^2}+\alpha_1,
\end{eqnarray}
$\forall u \in U,$ so that,
\begin{eqnarray}
&& \inf_{u \in U}\left\{\int_\Omega v_1^* u^2\;dx+\frac{K}{2}\int_\Omega u^2 \;dx-
\langle u,f \rangle_{L^2}-\langle z^*,u \rangle_{L^2}\right\}
\nonumber \\ &\geq& \inf_{u \in U}\left\{-\frac{\gamma}{2}\int_\Omega \nabla u \cdot \nabla u \;dx+\frac{K}{2}\int_\Omega u^2 \;dx -\langle z^*,u \rangle_{L^2}\right\}+\alpha_1,
\end{eqnarray}
and hence,
$$-\frac{1}{2}\int_\Omega \frac{(z^*+f)^2}{2v_1^*+K} \geq -F^*(z^*)+\alpha_1,$$
so that, for $v_1^* \in C^*$ fixed, we have,
$$F^*(z^*)-\frac{1}{2}\int_\Omega \frac{(z^*+f)^2}{2v_1^*+K}\;dx \geq \alpha_1,$$
$\forall z^* \in Y^*.$

And indeed, from the general result in \cite{12a},
we have
$$\inf_{z^* \in Y^*}\left\{F^*(z^*)-\frac{1}{2}\int_\Omega \frac{(z^*+f)^2}{2v_1^*+K}\;dx\right\} = \alpha_1 \in \mathbb{R}.$$

From this, since $(v_0^*)_1 \in C^*$ and the optimization in $z^*$ in question is quadratic, we may infer that,
$$\tilde{J}^*((v_0^*)_1)=\inf_{z^* \in Y^*}J^*(z^*,(v_0^*)_1)=J^*(z_0^*,(v_0^*)_1).$$
From this, (\ref{us3099}) and (\ref{us3190}), we finally obtain,
\begin{eqnarray}
J(u_0)&=& \min_{u \in U} J(u) \nonumber \\ &=& \max_{v^*_1 \in C^*} \tilde{J}^*(v^*_1) \nonumber \\ &=& \tilde{J}^*((v_0^*)_1)
\nonumber \\ &=& J^*(z_0^*,(v_0^*)_1).
\end{eqnarray}
This completes the proof.
\end{proof}

\section{The Existence of a Global Solution for the Ginzburg-Landau System in the Presence of a Magnetic Field}
 In this section we develop a proof of existence of solution for the
 Ginzburg-Landau system in the presence of a magnetic field and concerning potential. We emphasize again that similar models, which are closely relating those of last sections, are addressed in \cite{100,101}.

 We highlight this existence result and the next duality principle have been presented in similar form in  my book, "A Classical Description of Variational Quantum Mechanics
 and Related Models", \cite{500}. For the sake of completeness, we present both the results in details.
 
 Finally, as a previous related existence result we would cite \cite{1401}. 
 
 \begin{thm} Consider the functional $J: U \rightarrow  \mathbb{R}$ where
\begin{eqnarray}J(\phi,\mathbf{A})&=& \frac{\gamma}{2}\int_\Omega|\nabla \phi-i_m \rho \mathbf{A}\phi|^2_2\;dx
\nonumber \\ &&+\frac{\alpha}{4}\int_\Omega |\phi|^4\;dx-\frac{\beta}{2}\int_\Omega |\phi|^2\;dx
\nonumber \\ &&+ \frac{1}{8\pi}\int_{\Omega_1} |\text{ curl} (\mathbf{A})-\mathbf{B}_0|_2^2\;dx,
\end{eqnarray}

where $\Omega, \Omega_1$ are open bounded, simply connected sets such that $$\overline{\Omega} \subset \Omega_1.$$
We assume the boundaries $\partial \Omega$ and $\partial \Omega_1$ to be regular (Lipschitzian).
Here, again $i_m$ denotes the imaginary unit and $\gamma, \alpha, \beta$ and $\rho$ are positive constants.
Also, $$U=W^{1,2}(\Omega; \mathbb{C}) \times L^2(\Omega_1;\mathbb{R}^3).$$

Suppose there exists a minimizing sequence $(\phi_n,\mathbf{A}_n) \subset U$ for $J$ such that
$$\|\phi_n\|_\infty \leq K, \; \forall n \in \mathbb{N}$$ for some $K>0.$

Under such hypotheses, there exists $(\phi_0,\mathbf{A}_0) \in U$ such that
$$J(\phi_0,\mathbf{A}_0) =\min_{(\phi,\mathbf{A}) \in U}\{J(\phi,\mathbf{A})\}.$$
\end{thm}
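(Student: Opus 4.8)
The plan is to run the standard direct method of the calculus of variations, the only delicate point being that the functional $J(\phi,\mathbf{A})$ is \emph{not} coercive in $\mathbf{A}$ over all of $U$ (adding a gradient to $\mathbf{A}$ changes $|\nabla\phi - i_m\rho\mathbf{A}\phi|_2$ but the curl term only controls $\mathbf{A}$ up to gradients), so the $L^2$-bound on $\mathbf{A}_n$ must be extracted with some care. First I would fix a minimizing sequence $(\phi_n,\mathbf{A}_n)$ with $\|\phi_n\|_\infty \le K$, as hypothesized, and note that $J(\phi_n,\mathbf{A}_n) \le C$ for all $n$. From the last term, $\|\operatorname{curl}\mathbf{A}_n - \mathbf{B}_0\|_{L^2(\Omega_1)}^2 \le 8\pi C$, hence $\|\operatorname{curl}\mathbf{A}_n\|_{L^2(\Omega_1)}$ is bounded. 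From the first term, $\|\nabla\phi_n - i_m\rho\mathbf{A}_n\phi_n\|_{L^2(\Omega)}$ is bounded, and combined with the $L^\infty$-bound and the $L^4/L^2$ terms (which show $\|\phi_n\|_{L^2(\Omega)}$ and $\|\phi_n\|_{L^4(\Omega)}$ are bounded, in fact already controlled by $K$), I want to conclude $\nabla\phi_n$ is bounded in $L^2(\Omega)$; but this needs a bound on $\mathbf{A}_n\phi_n$ in $L^2(\Omega)$, i.e.\ effectively on $\mathbf{A}_n$ where $\phi_n$ is not small. This is the main obstacle.

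To overcome it, I would exploit gauge invariance: the term $\frac{1}{8\pi}\int_{\Omega_1}|\operatorname{curl}\mathbf{A} - \mathbf{B}_0|_2^2\,dx$ depends on $\mathbf{A}$ only through $\operatorname{curl}\mathbf{A}$, so replacing $\mathbf{A}_n$ by $\mathbf{A}_n - \nabla\chi_n$ and $\phi_n$ by $e^{i_m\rho\chi_n}\phi_n$ leaves $J$ unchanged while preserving the $L^\infty$-bound on the new $\phi_n$. Choosing $\chi_n$ so that the modified $\mathbf{A}_n$ is divergence-free with, say, vanishing normal trace on $\partial\Omega_1$ (a Helmholtz/Hodge decomposition on the simply connected set $\Omega_1$), one gets from $\operatorname{curl}\mathbf{A}_n$ bounded in $L^2$ together with $\operatorname{div}\mathbf{A}_n = 0$ and the boundary condition that $\mathbf{A}_n$ is bounded in $W^{1,2}(\Omega_1;\mathbb{R}^3)$, by the standard div-curl elliptic estimate $\|\mathbf{A}\|_{W^{1,2}} \le C(\|\operatorname{curl}\mathbf{A}\|_{L^2} + \|\operatorname{div}\mathbf{A}\|_{L^2} + \|\mathbf{A}\cdot\mathbf{n}\|_{\text{bdry}})$ valid on such domains. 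Hence (after this gauge fixing, which we may assume WLOG since the infimum is unchanged) $\mathbf{A}_n$ is bounded in $W^{1,2}(\Omega_1;\mathbb{R}^3)$, so $\mathbf{A}_n$ is bounded in $L^6(\Omega_1)$ by Sobolev imbedding; combined with $\|\phi_n\|_\infty \le K$ and $\|\phi_n\|_{L^3(\Omega)}$ bounded, $\mathbf{A}_n\phi_n$ is bounded in $L^2(\Omega)$, so $\nabla\phi_n$ is bounded in $L^2(\Omega)$ and therefore $\phi_n$ is bounded in $W^{1,2}(\Omega;\mathbb{C})$.

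With these bounds in hand, I would pass to a subsequence (not relabeled) such that $\phi_n \rightharpoonup \phi_0$ weakly in $W^{1,2}(\Omega;\mathbb{C})$ and $\mathbf{A}_n \rightharpoonup \mathbf{A}_0$ weakly in $W^{1,2}(\Omega_1;\mathbb{R}^3)$, hence by Rellich--Kondrachov $\phi_n \to \phi_0$ strongly in $L^p(\Omega)$ for $p<6$ (in particular $L^2$ and $L^4$) and a.e., and $\mathbf{A}_n \to \mathbf{A}_0$ strongly in $L^p(\Omega_1)$ for $p<6$ and a.e.; also $\|\phi_0\|_\infty \le K$ by a.e.\ convergence. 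The terms $\frac{\alpha}{4}\int|\phi|^4$ and $-\frac{\beta}{2}\int|\phi|^2$ then converge (strong $L^4$, $L^2$ convergence). For $\frac{1}{8\pi}\int_{\Omega_1}|\operatorname{curl}\mathbf{A} - \mathbf{B}_0|_2^2$, since $\operatorname{curl}$ is linear and continuous, $\operatorname{curl}\mathbf{A}_n \rightharpoonup \operatorname{curl}\mathbf{A}_0$ weakly in $L^2$, so this convex term is weakly lower semicontinuous. For the main term $\frac{\gamma}{2}\int_\Omega|\nabla\phi_n - i_m\rho\mathbf{A}_n\phi_n|_2^2\,dx$, I would argue that $\mathbf{A}_n\phi_n \to \mathbf{A}_0\phi_0$ strongly in $L^2(\Omega)$ (product of a strongly $L^p$-convergent, $p<6$, sequence with a uniformly bounded a.e.-convergent one, using e.g.\ $\mathbf{A}_n\phi_n - \mathbf{A}_0\phi_0 = (\mathbf{A}_n-\mathbf{A}_0)\phi_n + \mathbf{A}_0(\phi_n-\phi_0)$ and dominated convergence), while $\nabla\phi_n \rightharpoonup \nabla\phi_0$ weakly in $L^2(\Omega)$; therefore $\nabla\phi_n - i_m\rho\mathbf{A}_n\phi_n \rightharpoonup \nabla\phi_0 - i_m\rho\mathbf{A}_0\phi_0$ weakly in $L^2(\Omega)$, and the $L^2$-norm squared is weakly lower semicontinuous. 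Summing the four contributions gives $J(\phi_0,\mathbf{A}_0) \le \liminf_n J(\phi_n,\mathbf{A}_n) = \inf_U J$, and since $(\phi_0,\mathbf{A}_0) \in U$ this is an equality, establishing that the infimum is attained. I expect the gauge-fixing / div-curl estimate step to be the only substantive difficulty; everything after the uniform $W^{1,2}$-bounds is routine weak lower semicontinuity.
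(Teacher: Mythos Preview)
Your proposal is correct and follows essentially the same route as the paper: gauge-fix the minimizing sequence to Coulomb gauge ($\operatorname{div}\mathbf{A}_n=0$, $\mathbf{A}_n\cdot\mathbf{n}=0$ on $\partial\Omega_1$), use a div--curl/Friedrichs estimate on the simply connected domain to bound $\mathbf{A}_n$, combine this with the hypothesis $\|\phi_n\|_\infty\le K$ to bound $\nabla\phi_n$ in $L^2$, and then pass to weak limits and invoke weak lower semicontinuity of each term. The only minor difference is that you invoke the full $W^{1,2}$ div--curl estimate (giving strong $L^p$ convergence of $\mathbf{A}_n$ and hence strong $L^2$ convergence of $\mathbf{A}_n\phi_n$), whereas the paper uses only the $L^2$ Friedrichs bound and argues weak convergence of $\mathbf{A}_n'\phi_n'$ via a dominated-convergence argument on $\phi_n'\mathbf{v}$ for fixed test vectors $\mathbf{v}$; both variants work.
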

\begin{proof}

Define
$$\alpha_1=\inf_{(\phi,\mathbf{A}) \in U}\{J(\phi,\mathbf{A})\} \in \mathbb{R}.$$
From the hypotheses,
$$\lim_{n \rightarrow \infty} J(\phi_n,\mathbf{A}_n)=\alpha_1.$$

From the expression of $J$, there exists $K_1>0$ such that
$$\|\text{curl} (\mathbf{A}_n)\|_2^2 \leq K_1, \; \forall n \in \mathbb{N}.$$

Given $(\phi,\mathbf{A}) \in U,$ define $(\phi', \mathbf{A}') \in U$ by
$$\phi'=\phi e^{i_m\rho \varphi},$$
and
$$\mathbf{A}'=\mathbf{A}+\nabla \varphi,$$ where $\varphi$ will be specified in the next lines.

Observe that,

{\footnotesize{\begin{eqnarray}|\nabla \phi'-i_m\rho \mathbf{A}' \phi'|_2
&=& |\nabla(\phi e^{i_m\rho \varphi})-i_m\rho(\mathbf{A}+\nabla \varphi)\phi e^{i_m\rho \varphi}|_2
\nonumber \\ &=& |\nabla \phi e^{i_m\rho \varphi}+\phi i_m \rho e^{i_m\rho \varphi}\nabla \varphi
-i_m \rho\mathbf{A}\phi e^{i_m\rho \varphi}-i_m\rho\phi\nabla \varphi e^{i_m \rho \varphi}|_2 \nonumber \\ &=&
|(\nabla \phi-i_m \rho \mathbf{A}\phi)e^{i_m\rho \varphi}|_2 \nonumber \\ &=&
|\nabla \phi-i_m \rho \mathbf{A}\phi)|_2. \end{eqnarray}}}

Moreover
$$\text{curl}(\mathbf{A}')=\text{curl}(\mathbf{A})+\text{curl}(\nabla \varphi)=\text{curl}(\mathbf{A}).$$

Also
$$|\phi'|=|\phi e^{i_m \rho \varphi}|=|\phi|.$$

From these last calculations, we may infer  the system gauge invariance, that is,
$$J(\phi,\mathbf{A})=J(\mathbf{\phi'},\mathbf{A}').$$

In particular, we shall choose $\varphi \in W^{1,2}(\Omega_1)$  such that
$$div(\mathbf{A}')=div(\mathbf{A})+\nabla^2\varphi=0,$$ and, denoting by $\mathbf{n}$ the outward normal to $\partial \Omega_1$,
$$\mathbf{A}'\cdot \mathbf{n}=\mathbf{A}\cdot\mathbf{n}+\nabla \varphi \cdot \mathbf{n}=0, \text{ on } \partial \Omega_1$$
that is,
$$\nabla^2 \varphi=-div(\mathbf{A}),\; \text{ in } \Omega_1,$$
$$\nabla \varphi \cdot \mathbf{n}=-\mathbf{A}\cdot \mathbf{n},\; \text{ on } \partial \Omega_1.$$

Observe that at first we would have,
$$\inf_{(\phi,\mathbf{A}) \in U} J(\phi,\mathbf{A}) \leq \inf_{(\phi',\mathbf{A}') \in U} J(\phi',\mathbf{A}').$$

However $$J(\phi'_n,\mathbf{A}'_n)=J(\phi_n,\mathbf{A}_n) \rightarrow \alpha_1, \; \text{ as } n
\rightarrow \infty,$$
so that
$$\inf_{(\phi,\mathbf{A}) \in U} J(\phi,\mathbf{A}) = \inf_{(\phi',\mathbf{A}') \in U} J(\phi',\mathbf{A}').$$

From Friedrichs' inequality, we have,
\begin{eqnarray}
K_1^2 &\geq& \|\text{curl} (\mathbf{A}_n)\|_2^2 \nonumber \\ &=&
\|\text{ curl}(\mathbf{A}'_n)\|_2^2+\|div(\mathbf{A}'_n)\|_2^2 \geq K_2\|\mathbf{A}'_n\|_2^2, \; \forall n \in \mathbb{N},
\end{eqnarray}
for some $K_2>0$.

Hence, $$\|\mathbf{A}'_n\|_2 \leq K_3, \forall n \in \mathbb{N}$$
for some $K_3>0.$

We recall that,
$$\|\phi_n'\|_{\infty} = \|\phi_n\|_\infty \leq K, \forall n \in \mathbb{N}.$$

Hence,
\begin{eqnarray}\label{a800}J(\phi'_n,\mathbf{A}'_n)&=& \frac{\gamma}{2}\int_\Omega|\nabla \phi'_n-i_m \rho \mathbf{A}'_n\phi'_n|^2_2\;dx
\nonumber \\ &&+\frac{\alpha}{4}\int_\Omega |\phi'_n|^4\;dx-\frac{\beta}{2}\int_\Omega |\phi'_n|^2\;dx
\nonumber \\ &&+ \frac{1}{8\pi}\int_{\Omega_1} |\text{curl}(\mathbf{A}'_n)-\mathbf{B}_0|_2^2\;dx \nonumber \\ &\geq&
\frac{\gamma}{2}\int_\Omega|\nabla \phi'_n|^2\;dx-\gamma |\rho|\|\phi_n'\|_{\infty}\|\mathbf{A}_n'\|_2 \|\nabla \phi_n'\|_2
\nonumber \\ &&+\frac{\gamma}{2}|\rho|^2\|\mathbf{A}_n'\phi'_n\|_2^2
\nonumber \\ &&+\frac{\alpha}{4}\int_\Omega |\phi'_n|^4\;dx-\frac{\beta}{2}\int_\Omega |\phi'_n|^2\;dx
\nonumber \\ &&+ \frac{1}{8\pi}\int_{\Omega_1} |\text{curl}(\mathbf{A}'_n)-\mathbf{B}_0|_2^2\;dx
\nonumber \\ &\geq&
\frac{\gamma}{2}\|\nabla \phi'_n\|^2_2\;dx-\gamma K K_3 |\rho|\|\nabla \phi_n'\|_2
\nonumber \\ &&+\frac{\gamma}{2}|\rho|^2\|\mathbf{A}_n'\phi'_n\|_2^2
\nonumber \\ &&+\frac{\alpha}{4}\int_\Omega |\phi'_n|^4\;dx-\frac{\beta}{2}\int_\Omega |\phi'_n|^2\;dx
\nonumber \\ &&+ \frac{1}{8\pi}\int_{\Omega_1} |\text{ curl}(\mathbf{A}'_n)-\mathbf{B}_0|_2^2\;dx. \end{eqnarray}

Suppose, to obtain contradiction, there exists  a subsequence $\{n_k\}$ such that
$$\|\nabla \phi'_{n_k}\|_2 \rightarrow +\infty, \text{ as } k \rightarrow \infty.$$

From this and (\ref{a800}) we obtain,
$$J(\phi_{n_k}',\mathbf{A}_{n_k}') \rightarrow  +\infty, \text{ as } k \rightarrow +\infty,$$

which contradicts
$$ J(\phi'_n,\mathbf{A}_n') \rightarrow \alpha_1, \; \text{ as } n \rightarrow +\infty.$$

Therefore, there exists $K_4>0$ such that
$$\|\nabla \phi'_n\|_2 \leq K_4 \in \mathbb{R}^+, \; \forall n \in \mathbb{N}.$$

Hence, from the Rellich- Krondrachov Theorem, there exists $\phi_0 \in W^{1,2}(\Omega; \mathbb{C})$ such
that, up to a not relabeled subsequence,
$$\nabla \phi_n' \rightharpoonup \nabla \phi_0, \text{ weakly in } L^2,$$
and
$$\phi_n' \rightarrow \phi_0, \text{ strongly in } L^2.$$
Also, since
$$\|\text{curl}(\mathbf{A}'_n)\|_2 \leq K_1, \; \forall n \in \mathbb{N},$$
there exists $\mathbf{v}_0 \in L^2(\Omega_1; \mathbb{R}^3)$ such that
$$\text{curl}(\mathbf{A}_n') \rightharpoonup \mathbf{v}_0, \text{ weakly in } L^2(\Omega_1; \mathbb{R}^3).$$

Also, since $$\|\mathbf{A}_n'\|_2 \leq K_4, \forall n \in \mathbb{N},$$ there exists
$$\mathbf{A}_0 \in L^2(\Omega_1; \mathbb{R}^3),$$ such that, up to a not relabeled subsequence,
$$\mathbf{A}_n' \rightharpoonup \mathbf{A}_0, \; \text{ weakly in } L^2(\Omega_1;\mathbb{R}^3).$$

Now fix $$\hat{\phi} \in C_c^\infty(\Omega_1; \mathbb{R}^3).$$

Thus, we have,
\begin{eqnarray}
\langle \mathbf{A}_0, \text{curl}^*(\hat{\phi}) \rangle_{L^2} &=&
\lim_{n \rightarrow \infty}\langle \mathbf{A}_n', \text{curl}^*(\hat{\phi}) \rangle_{L^2}
\nonumber \\ &=&\lim_{n \rightarrow \infty}\langle \text{curl} (\mathbf{A}_n'), \hat{\phi} \rangle_{L^2}
\nonumber \\ &=& \langle \mathbf{v}_0, \hat{\phi} \rangle_{L^2}. \end{eqnarray}

Since $\hat{\phi} \in C_c^\infty(\Omega_1;\mathbb{R}^3)$
is arbitrary, we may infer that
$$\mathbf{v}_0=\text{curl}(\mathbf{A}_0),$$ in distributional sense.

At this point we shall prove that, up to a not relabeled subsequence, we have,
$$\mathbf{A}_n'\phi_n' \rightharpoonup \mathbf{A}_0\phi_0, \; \text{ weakly in } L^2(\Omega; \mathbb{C}^3).$$

Fix $\mathbf{v} \in L^2(\Omega; \mathbb{C}^3)$.  Therefore, up to a not relabeled subsequence, we have that
$$|\phi_n' \mathbf{v}-\phi_0 \mathbf{v}|_2^2 \rightarrow 0, \text{ a.e. in } \Omega.$$

Observe that  $$\|\phi_n'\|_\infty < K, \; \forall n \in \mathbb{N},$$
so that
$$|\phi_n'\mathbf{v}-\phi_0 \mathbf{v}|_2^2 \leq 2K^2|\mathbf{v}|_2^2 \in L^1(\Omega; \mathbb{R}).$$

Thus, from the Lebesgue dominated convergence theorem, we obtain
$$\|\phi_n'\mathbf{v}-\phi_0\mathbf{v}\|_2^2 \rightarrow 0, \text{ as } n \rightarrow \infty.$$

Hence, since $$\mathbf{A}_n' \rightharpoonup \mathbf{A}_0, \; \text{ weakly in } L^2(\Omega_1;\mathbb{R}^3),$$
we have,
\begin{eqnarray}
&&\left|\int_\Omega(\mathbf{A}_n' \cdot \phi_n'\mathbf{v}-\mathbf{A}_0 \cdot \phi_0\mathbf{v})\;dx\right| \nonumber \\ &=&
\left|\int_\Omega(\mathbf{A}_n' \cdot\phi_n'\mathbf{v}-\mathbf{A}_n' \cdot \phi_0\mathbf{v}+\mathbf{A}_n' \cdot \phi_0\mathbf{v}-\mathbf{A}_0 \cdot \phi_0\mathbf{v})\;dx\right| \nonumber \\ &\leq &
\|\mathbf{A}_n'\|_2 \|\phi_n'\mathbf{v}-\phi_0\mathbf{v}\|_ 2+\left|\int_\Omega \mathbf{A}_n' \cdot \phi_0\mathbf{v}-\mathbf{A}_0 \cdot \phi_0\mathbf{v})\;dx\right|\nonumber \\ &\rightarrow&
0, \text{ as } n \rightarrow \infty.\end{eqnarray}

Since $\mathbf{v} \in L^2(\Omega; \mathbb{C}^3)$ is arbitrary, we may infer that
$$\mathbf{A}_n'\phi_n' \rightharpoonup \mathbf{A}_0\phi_0, \text{ weakly in } L^2(\Omega; \mathbb{C}^3).$$

From this we obtain
$$\nabla \phi_n' -i_m \rho \mathbf{A}_n'\phi'_n \rightharpoonup \nabla \phi_0-i_m \rho \mathbf{A}_0 \phi_0,
\text{ weakly in } L^2(\Omega; \mathbb{C}^3),$$
so that
\begin{eqnarray}
\liminf_{n \rightarrow \infty}\left\{\int_\Omega|\nabla \phi_n' -i_m \rho \mathbf{A}_n'\phi'_n|_2^2\;dx\right\}
\geq \int_\Omega|\nabla \phi_0 -i \rho \mathbf{A}_0\phi_0|_2^2\;dx,
\end{eqnarray}

Also, from $$\phi_n' \rightharpoonup \phi_0, \text{ weakly in }W^{1,2}(\Omega; \mathbb{C})$$
and
$$\text{curl}(\mathbf{A}_n') \rightharpoonup \text{curl}(\mathbf{A}_0), \text{ weakly in }L^2(\Omega_1,\mathbb{R}^3),$$
from the convexity of the functional involved, we obtain,
\begin{eqnarray}
&&\liminf_{n \rightarrow \infty} \left\{ \frac{1}{8 \pi} \int_{\Omega_1}|\text{curl}(\mathbf{A}_n')-\mathbf{B}_0|^2_2\;dx
+\frac{\alpha}{4}\int_\Omega |\phi_n'|^4\;dx \right\} \nonumber \\ &\geq&
\frac{1}{8 \pi} \int_{\Omega_1}|\text{curl}(\mathbf{A}_0)-\mathbf{B}_0|^2_2\;dx
+\frac{\alpha}{4}\int_\Omega |\phi_0|^4\;dx,
\end{eqnarray}
so that, from these last results and from $$\phi_n' \rightarrow \phi_0, \text{ strongly in } L^2(\Omega; \mathbb{C}),$$
we get,
\begin{eqnarray}
\inf_{(\phi, \mathbf{A}) \in U}J(\phi,\mathbf{A})&=&\alpha_1 \nonumber \\ &=&
\liminf_{n \rightarrow \infty} J(\phi_n',\mathbf{A}_n') \nonumber \\ &\geq&
J(\phi_0, \mathbf{A}_0).
\end{eqnarray}

The proof is complete.
\end{proof}

\section{Duality for the  Complex Ginzburg-Landau System}
\noindent In this subsection we present a duality principle and relating sufficient optimality criterion for the
full complex Ginzburg-Landau system.

The basic results on convex analysis here developed may be found in \cite{29,12}.
Our results are summarized by the following theorem.

\begin{thm} Let $\Omega,\Omega_1 \subset \mathbb{R}^3$ be  open, bounded, connected sets with  regular (Lipischtzian) boundaries
denoted by $\partial \Omega$ and $\partial \Omega_1$ respectively, where $\overline{\Omega} \subset \Omega_1$ and $\Omega$ corresponds to a super conducting sample. Consider the Ginzburg-Landau energy given by $J:V_1 \times V_2 \rightarrow \mathbb{R}$
where,
\begin{eqnarray}J(\phi,\mathbf{A})&=& \frac{\gamma}{2}\int_\Omega|\nabla \phi-i_m \rho \mathbf{A}\phi|^2_2\;dx
\nonumber \\ &&+\frac{\alpha}{2}\int_\Omega (|\phi|^2-\beta)^2\;dx-\langle \phi,f\rangle_{L^2} \nonumber \\ &&
+ \frac{1}{8\pi}\int_{\Omega_1} |\text{curl} (\mathbf{A})-\mathbf{B}_0|_2^2\;dx,
\end{eqnarray}
and where $\alpha,\;\gamma,\;\rho>0$, $f \in L^2(\Omega;\mathbb{C}).$

In particular, from the Ginzburg-Landau theory for the dimensionless case we have, $\gamma=1,$ $\alpha =\frac{1}{2(1+t^2)^2}$ and $\beta =1-t^4$,
where $t=T/T_c$,  $T_c$ is the critical temperature and $T$ is the super-conducting sample actual one. A typical value for $t$ is $t=0.95$. Finally the value $1/(8\pi)$ may also vary according to type of material or type of superconductor.

Moreover,
$$V_1=W^{1,2}(\Omega;\mathbb{C}),$$
$$V_2=W^{1,2}(\Omega_1;\mathbb{R}^3).$$

Here,  we generically denote
$$\langle g,h\rangle_{L^2}=\int_\Omega Re[g] Re[h]\;dx+\int_\Omega Im[g]Im[h]\;dx,$$
$\forall h,g \in L^2(\Omega;\mathbb{C})$, where $Re[a], Im[a]$ denote the real and imaginary parts of $a$,
$\forall a \in \mathbb{C},$ respectively.

We also denote,
$$J(\phi,\mathbf{A})=G_0(\phi,\nabla \phi,\mathbf{A})+G_1(\phi,0)+G_2(\mathbf{A}),$$
$$G_0(\phi,\nabla\phi,\mathbf{A})=\frac{\gamma}{2}\int_\Omega |\nabla \phi-i_m\rho\mathbf{A}\phi|_2^2\;dx,$$
$$G_1(\phi,v_3)=\frac{\alpha}{2}\int_\Omega (|\phi|^2-\beta+v_3)^2\;dx -\langle \phi,f\rangle_{L^2},$$
and, $$G_2(\mathbf{A})=\frac{1}{8\pi}\int_{\Omega_1} |\text{ curl} (\mathbf{A})-\mathbf{B}_0|_2^2\;dx.$$

Moreover, we define,
\begin{eqnarray}G_0^*(v_1^*)&=&\sup_{(\phi,v_1) \in V_1 \times Y}\{\langle v_1^*, v_1-i_m\rho \mathbf{A} \phi \rangle_{L^2}-G_0(\phi,v_1,\mathbf{A})\}
\nonumber \\ &=& \frac{1}{2 \gamma}\int_\Omega |v_1^*|^2_2\;dx,\end{eqnarray}

\begin{eqnarray}
G_1^*(v_1^*,v_3^*,\mathbf{A})&=& \sup_{(\phi,v_3) \in V_1 \times Y_1} \{\langle v_1^*, \nabla \phi -i_m\rho\mathbf{A} \phi \rangle_{L^2}+
\langle v_3^*,v_3 \rangle_{L^2}  -G_1(\phi,v_3)\}\nonumber \\ &=&\frac{1}{2}\int_\Omega \frac{|div(v_1^*)+i_m\rho\mathbf{A}\cdot v_1^*-f|^2}{2v_3^*}\;dx+\frac{1}{2\alpha}\int_\Omega (v_3^*)^2\;dx \nonumber \\ &&
+\int_\Omega \beta (v_3^*)\;dx,
\end{eqnarray}
if $v^* \in B_1$, where
$$B_1=\{v^* \in Y^*\times Y_1^* \;:\; v_3^*>0 \text{ in }\overline{\Omega}\}.$$

We also denote
\begin{eqnarray}B_2&=&\{v^* \in Y^*\;:\; \nonumber \\ &&
\frac{1}{8 \pi}\int_{\Omega_1}|curl \mathbf{A}|_2^2\;dx-\frac{1}{2}\int_\Omega \frac{|\rho v_1^* \cdot \mathbf{A}|^2}{2\;v_3^*} \;dx > 0,
\nonumber \\ && \forall \mathbf{A} \in D^*, \text{ such that } \mathbf{A} \neq \mathbf{0}\},
\end{eqnarray}
$$D^*=\{ \mathbf{A} \in V_2\;:\; div(\mathbf{A})=0, \text{ in } \Omega_1, \text{ and } \mathbf{A} \cdot \mathbf{n}=0
\text{ on }\partial \Omega_1\},$$ $$C^*=B_1 \cap B_2,$$
and $$Y=Y^*=L^2(\Omega;\mathbb{C}^3) \text{ and } Y_1=Y_1^*=L^2(\Omega).$$
Under such assumptions, we have,
\begin{eqnarray}\inf_{(\phi,\mathbf{A}) \in V_1 \times D^*} J(\phi, \mathbf{A})
&\geq& \sup_{v^* \in C^*}\{\inf_{\mathbf{A} \in D^*}\{J^*(v^*,\mathbf{A})+G_2(\mathbf{A})\}\}
\nonumber \\ &=& \sup_{v^* \in C^*} \tilde{J}^*(v^*),\end{eqnarray}
where $$J^*(v^*,\mathbf{A})=-G_0^*(v_1^*)-G_1^*(v_1^*,v_3^*,\mathbf{A}).$$
and $$\tilde{J}^*(v^*)=\inf_{\mathbf{A} \in D^*}\{J^*(v^*,\mathbf{A})+G_2(\mathbf{A})\}.$$

Moreover, assume there exists a critical point $(v_0^*,\mathbf{A}_0) \in C^* \times D^*$ such that
$$\delta \{J^*(v^*_0,\mathbf{A}_0)+G_2(\mathbf{A}_0)\}=\mathbf{0}.$$

Under such hypotheses, defining
$$\phi_0=\frac{div((v_0^*)_1)+i_m\rho \mathbf{A}_0\cdot (v_0^*)_1-f}{2(v_0^*)_3},$$
we have,
\begin{eqnarray}
J(\phi_0,\mathbf{A}_0)&=&\min_{(\phi,\mathbf{A}) \in V_1\times D^*} J(\phi,\mathbf{A})
\nonumber \\ &=& \sup_{v^* \in C^*}\{\inf_{\mathbf{A} \in D^*}\{J^*(v^*,\mathbf{A})+G_2(\mathbf{A})\}\}
\nonumber \\ &=& \max_{v^* \in C^*} \tilde{J}^*(v^*) \nonumber \\ &=& \tilde{J}^*(v_0^*) \nonumber \\
&=& J^*(v_0^*, \mathbf{A}_0)+G_2(\mathbf{A}_0).
\end{eqnarray}
\end{thm}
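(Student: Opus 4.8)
The plan is to follow the same two-stage template used in the previous global-optimization theorem (Section 4), now adapted to the complex Ginzburg--Landau functional with the magnetic field and the constrained space $D^*$. The first stage establishes the weak duality inequality $\inf_{(\phi,\mathbf{A}) \in V_1 \times D^*} J(\phi,\mathbf{A}) \geq \sup_{v^* \in C^*} \tilde{J}^*(v^*)$; the second stage shows that at a critical point of $v^* \mapsto \tilde{J}^*(v^*)$ the inequality is saturated, so that $\phi_0$ and $\mathbf{A}_0$ are in fact global extremals. For the weak duality part I would start from the Legendre-type inequalities $G_0^*(v_1^*) \geq \langle v_1^*, \nabla\phi - i_m\rho\mathbf{A}\phi\rangle_{L^2} - G_0(\phi,\nabla\phi,\mathbf{A})$ and $G_1^*(v_1^*,v_3^*,\mathbf{A}) \geq \langle v_1^*, \nabla\phi - i_m\rho\mathbf{A}\phi\rangle_{L^2} + \langle v_3^*, v_3\rangle_{L^2} - G_1(\phi,v_3)$, valid for all admissible arguments. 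Adding them and specializing to $v_3 = 0$ kills the two copies of $\langle v_1^*,\nabla\phi - i_m\rho\mathbf{A}\phi\rangle_{L^2}$ only after one observes that the sum telescopes: $-G_0^*(v_1^*) - G_1^*(v_1^*,v_3^*,\mathbf{A}) \leq G_0(\phi,\nabla\phi,\mathbf{A}) + G_1(\phi,0) - \langle v_1^*,\nabla\phi-i_m\rho\mathbf{A}\phi\rangle_{L^2} \cdot 0$; the cross terms cancel because the same dual variable $v_1^*$ is paired with the same affine expression in both Legendre transforms. Adding $G_2(\mathbf{A})$ to both sides yields $J^*(v^*,\mathbf{A}) + G_2(\mathbf{A}) \leq J(\phi,\mathbf{A})$ for all $\phi \in V_1$, $\mathbf{A} \in D^*$, $v^* \in C^*$; taking the infimum over $\mathbf{A} \in D^*$ on the left and then the supremum over $v^* \in C^*$ gives the claimed inequality.

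For the critical-point stage I would take $(v_0^*,\mathbf{A}_0) \in C^* \times D^*$ with $\delta\{J^*(v_0^*,\mathbf{A}_0) + G_2(\mathbf{A}_0)\} = \mathbf{0}$ and extract the stationarity equations. The variation in $(v_0^*)_1$ forces $\nabla\phi_0 - i_m\rho\mathbf{A}_0\phi_0 = (v_0^*)_1/\gamma$ after integration by parts (using $\phi_0$ as defined in the statement and the relation $\langle (v_0^*)_1, \nabla\phi - i_m\rho\mathbf{A}_0\phi\rangle = \langle -\mathrm{div}((v_0^*)_1) + i_m\rho\mathbf{A}_0\cdot(v_0^*)_1, \phi\rangle$), which is precisely the condition for $G_0^*(v_0^*{}_1) = \langle (v_0^*)_1, \nabla\phi_0 - i_m\rho\mathbf{A}_0\phi_0\rangle_{L^2} - G_0(\phi_0,\nabla\phi_0,\mathbf{A}_0)$, i.e. equality in the first Legendre inequality. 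The variation in $(v_0^*)_3$ gives $\dfrac{|\mathrm{div}((v_0^*)_1) + i_m\rho\mathbf{A}_0\cdot(v_0^*)_1 - f|^2}{(2(v_0^*)_3)^2} - \dfrac{(v_0^*)_3}{\alpha} - \beta = 0$, hence $(v_0^*)_3 = \alpha(|\phi_0|^2 - \beta)$, which is exactly the equality condition in the second Legendre inequality $G_1^*(v_0^*{}_1,(v_0^*)_3,\mathbf{A}_0) = \langle (v_0^*)_1, \nabla\phi_0 - i_m\rho\mathbf{A}_0\phi_0\rangle_{L^2} - G_1(\phi_0,0)$. Combining these two equalities gives $J(\phi_0,\mathbf{A}_0) = -G_0^*(v_0^*{}_1) - G_1^*(v_0^*{}_1,(v_0^*)_3,\mathbf{A}_0) + G_2(\mathbf{A}_0) = J^*(v_0^*,\mathbf{A}_0) + G_2(\mathbf{A}_0)$, so the weak duality inequality is attained and $\phi_0$ minimizes $J(\cdot,\mathbf{A})$ globally for the optimal $\mathbf{A} = \mathbf{A}_0$.

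The last point to settle is that $\tilde{J}^*(v_0^*) = J^*(v_0^*,\mathbf{A}_0) + G_2(\mathbf{A}_0)$, i.e. that $\mathbf{A}_0$ actually realizes the inner infimum $\inf_{\mathbf{A}\in D^*}\{J^*(v_0^*,\mathbf{A}) + G_2(\mathbf{A})\}$, and that the outer supremum over $C^*$ is attained at $v_0^*$. For the inner infimum, the function $\mathbf{A} \mapsto J^*(v_0^*,\mathbf{A}) + G_2(\mathbf{A})$ is, after expanding $G_1^*$, a quadratic in $\mathbf{A}$ whose Hessian on $D^*$ is governed precisely by the condition defining $B_2$ (the term $\frac{1}{8\pi}\int_{\Omega_1}|\mathrm{curl}\,\mathbf{A}|_2^2\,dx - \frac{1}{2}\int_\Omega \frac{|\rho (v_0^*)_1\cdot\mathbf{A}|^2}{2(v_0^*)_3}\,dx > 0$ for $\mathbf{A}\neq\mathbf{0}$), so $v_0^* \in C^* = B_1 \cap B_2$ guarantees strict convexity in $\mathbf{A}$ on the subspace $D^*$; hence the critical point $\mathbf{A}_0$ is the unique global minimizer and the infimum is finite, invoking the general coercivity/existence result cited from \cite{12a} exactly as in Section 4. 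Finally, weak duality plus the chain of equalities $J(\phi_0,\mathbf{A}_0) = \tilde{J}^*(v_0^*) \leq \sup_{v^*\in C^*}\tilde{J}^*(v^*) \leq \inf_{(\phi,\mathbf{A})} J(\phi,\mathbf{A}) \leq J(\phi_0,\mathbf{A}_0)$ collapses everything, proving the supremum is attained at $v_0^*$ and that all five quantities in the conclusion coincide. The main obstacle I anticipate is the bookkeeping in the stationarity computation: correctly integrating by parts the term $\langle v_1^*, \nabla\phi - i_m\rho\mathbf{A}\phi\rangle_{L^2}$ over the complex-valued fields and carefully tracking the $D^*$ constraint (divergence-free, zero normal trace) so that the boundary contributions vanish and the $B_2$ condition emerges as the genuine second-order criterion for the inner minimization in $\mathbf{A}$; the rest is routine Legendre-transform algebra parallel to the real case already carried out in the preceding sections.
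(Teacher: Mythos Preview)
Your proposal is correct and follows essentially the same route as the paper's proof: weak duality via the two Legendre--Fenchel inequalities for $G_0^*$ and $G_1^*$ (the cross term $\langle v_1^*,\nabla\phi-i_m\rho\mathbf{A}\phi\rangle_{L^2}$ cancelling), then stationarity in $(v_0^*)_1$ and $(v_0^*)_3$ to recover $(v_0^*)_1=\gamma(\nabla-i_m\rho\mathbf{A}_0)\phi_0$ and $(v_0^*)_3=\alpha(|\phi_0|^2-\beta)$, which turn both inequalities into equalities and collapse the duality chain. Your explicit use of the $B_2$ condition to certify strict convexity of $\mathbf{A}\mapsto J^*(v_0^*,\mathbf{A})+G_2(\mathbf{A})$ on $D^*$, hence that $\mathbf{A}_0$ realizes the inner infimum, is in fact more detailed than the paper, which simply asserts this step from $v_0^*\in C^*$ and criticality; otherwise the arguments coincide.
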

\begin{proof}
Observe that
\begin{eqnarray}
-J^*(v^*,\mathbf{A})-G_2(\mathbf{A}) &=& G_0^*(v_1^*)+G_1^*(v^*_1,v_3^*,\mathbf{A})-G_2(\mathbf{A})
\nonumber \\ &\geq& \langle v_1^*, \nabla \phi-i_m\rho \mathbf{A} \phi \rangle_{L^2}-G_0(\phi,\nabla \phi,\mathbf{A})
\nonumber \\ &&-\langle v_1^*, \nabla \phi-i_m\rho \mathbf{A} \phi \rangle_{L^2}+\langle v_3^*,0 \rangle_{L^2}
\nonumber \\ && -G_1(\phi,0)-G_2(\mathbf{A}),
\end{eqnarray}
$\forall \phi \in V_1,\; \mathbf{A} \in D^*,$
that is,
\begin{eqnarray}
&&G_0(\phi,\nabla \phi, \mathbf{A})+G_1(\phi,0)+G_2(\mathbf{A})
\nonumber \\ &\geq& -G_0^*(v_1^*)-G_1^*(v_1^*,v_3^*,\mathbf{A})+G_2(\mathbf{A}),
\end{eqnarray}
so that
\begin{eqnarray}
J(\phi,\mathbf{A}) &\geq& \inf_{\mathbf{A} \in D^*}\{-G_0^*(v_1^*)-G_1^*(v_1^*,v_3^*,\mathbf{A})+G_2(\mathbf{A})\}
\nonumber \\ &=& \inf_{\mathbf{A} \in D^*}\{J^*(v^*,\mathbf{A})+G_2(\mathbf{A})\} \nonumber \\ &=&
\tilde{J}(v^*), \; \forall (\phi, \mathbf{A}) \in V_1\times D^*,\; v^* \in C^*.
\end{eqnarray}

Thus,
\begin{equation}\label{us323}\inf_{(\phi,\mathbf{A}) \in V_1\times D^*} J(\phi,\mathbf{A}) \geq \sup_{v^* \in C^*} \tilde{J}^*(v^*). \end{equation}

Now, suppose $(v_0^*,\mathbf{A}_0) \in C^* \times D^*$ is such that
\begin{equation}\label{br9062}\delta \{J^*(v_0^*,\mathbf{A}_0)+G_2(\mathbf{A}_0)\}=\mathbf{0}.\end{equation}

From the variation in $v_1^*$ we obtain,
\begin{eqnarray}(v_0^*)_1&=&\gamma(\nabla-i\rho \mathbf{A}_0)\left(\frac{div((v_0^*)_1)+i_m\rho\mathbf{A}_0  \cdot (v_0^*)_1-f}{2(v_0^*)_3}\right)
\nonumber \\ &=&  \gamma(\nabla -i_m\rho \mathbf{A}_0)\phi_0,
\end{eqnarray}
so that,
\begin{equation}\label{us324}G_0^*((v_0^*)_1)=\langle (v_0^*)_1, \nabla \phi_0-i_m\rho \mathbf{A}_0 \phi_0 \rangle_{L^2}-G_0(\phi_0,\nabla\phi_0,\mathbf{A}_0).\end{equation}

From the variation in $v_3^*$ we obtain
 $$\frac{(div((v_0^*)_1)+i_m\rho\mathbf{A}_0 \cdot (v_0^*)_1-f)^2}{(2(v_0^*)_3)^2}-\frac{(v_0^*)_3}{\alpha}-\beta=0,$$
 that is,
 $$(v_0^*)_3=\alpha(|\phi_0|^2-\beta),$$
 so that
 \begin{equation}\label{us325}G_1^*((v_0^*)_1,(v_0^*)_3, \mathbf{A}_0)=-\langle (v_0^*)_1,\nabla \phi_0-i_m\rho \mathbf{A}_0 \phi_0 \rangle_{L^2}-G_1(\phi_0,0).\end{equation}
From (\ref{us324}) and (\ref{us325}), we obtain

\begin{eqnarray}\label{us326}&& J^*(v_0^*,\mathbf{A}_0)+G_2(\mathbf{A}_0)\nonumber \\ &=&-G_0^*((v_0^*)_1)- G_1^*((v_0^*)_1,(v_0^*)_3, \mathbf{A}_0)+G_2(\mathbf{A}_0) \nonumber
\\ &=& G_0(\phi_0,\nabla \phi_0,\mathbf{A}_0)+G_1(\phi_0,0)+G_2(\mathbf{A}_0) \nonumber \\ &=& J(\phi_0,\mathbf{A}_0)
\end{eqnarray}

From  $v_0^* \in C^*$ and (\ref{br9062}) we have,
$$J^*(v_0^*,\mathbf{A}_0)+G_2(\mathbf{A}_0)=\tilde{J}^*(v_0^*).$$
From this, (\ref{us323}) and (\ref{us326}) we obtain,
\begin{eqnarray}
J(\phi_0,\mathbf{A}_0)&=&\min_{(\phi,\mathbf{A}) \in V_1\times D^*} J(\phi,\mathbf{A})
\nonumber \\ &=& \sup_{v^* \in C^*}\{\inf_{\mathbf{A} \in D^*}\{J^*(v^*,\mathbf{A})+G_2(\mathbf{A})\}\}
\nonumber \\ &=& \max_{v^* \in C^*} \tilde{J}^*(v^*) \nonumber \\ &=& \tilde{J}^*(v_0^*) \nonumber \\
&=& J^*(v_0^*, \mathbf{A}_0)+G_2(\mathbf{A}_0).
\end{eqnarray}
The proof is complete.
\end{proof}

\section{Conclusion} In the present work,  we have developed  duality principles applicable to a large class of variational non-convex models.

In a second step we have applied such  results to a Ginzburg-Landau type equation. We emphasize  the main theorems here  developed are a kind of generalization
of the main results found in Toland \cite{12a}, published in 1979 and \cite{85,2900}.

Following the approach presented in \cite{85}, we also highlight the duality principles obtained may be applied to non-linear and non-convex  models of plates, shells and elasticity.

Finally, as above mentioned, in the last sections we present a global existence result, a duality principle and respective optimality conditions for the complex Ginzburg-Landau system in superconductivity in the presence of a magnetic field and concerned magnetic potential.


\end{document}